\def\eqref#1{equation~\ref{#1}}
\def\1{\bm{1}}
\def\ve{{\bm{e}}}
\DeclareMathAlphabet{\mathsfit}{\encodingdefault}{\sfdefault}{m}{sl}
\SetMathAlphabet{\mathsfit}{bold}{\encodingdefault}{\sfdefault}{bx}{n}
\newcommand{\R}{\mathbb{R}}
\newcommand{\e}{\varepsilon}
\newcommand{\Exp}{\mathbf{E}}
\newcommand{\Prob}{\mathbf{P}}
\newcommand{\eqdef}{\stackrel{\text{def}}{=}}
\newcommand{\inner}[2]{\left\langle #1 , #2 \right\rangle}
\def\<#1,#2>{\left\langle #1,#2\right\rangle}
\newtheorem{theorem}{Theorem}
\newtheorem{ass}{Assumption}
\newtheorem{lem}{Lemma}
\newtheorem{thm}[theorem]{Theorem}
\newcommand{\inlineeqnum}{\refstepcounter{equation}~~\mbox{(\theequation)}}
\title{Minibatch Stochastic Three Points Method for Unconstrained Smooth Minimization}
\author{Soumia Boucherouite$^1$, Grigory Malinovsky$^2$, Peter Richtárik$^2$, EL Houcine ~Bergou$^1$ \\
$^1$School of Computer Science, Mohammed VI Polytechnic University, Benguerir, Morocco \\
$^2$King Abdullah University of Science and Technology, Thuwal, Saudi Arabia \\
\texttt{\{soumia.boucherouite,elhoucine.bergou\}@um6p.ma} \\
\texttt{\{grigorii.malinovskii,peter.richtarik\}@kaust.edu.sa} \\
}
\begin{document}

\maketitle

\begin{abstract}
In this paper, we propose a new zero order  optimization method called \textit{minibatch stochastic three points (\texttt{MiSTP})} method to solve an unconstrained minimization problem in a setting where only an approximation of the objective function evaluation is possible. It is based on the recently proposed stochastic three points (\texttt{STP}) method \citep{STP}. At each iteration, \texttt{MiSTP} generates a random search direction in a similar manner to \texttt{STP}, but chooses the next iterate based solely on the approximation of the objective function rather than its exact evaluations. We also analyze our method’s complexity in the nonconvex and convex cases and evaluate its performance on multiple machine learning tasks.
\end{abstract}

\section{Introduction}

In this paper we consider the following unconstrained finite-sum optimization problem:
\begin{equation} \label{problem}
     \min_{x \in \mathbb{R}^{d}}  f(x) \eqdef \frac{1}{n} \sum_{i=1}^n f_{i}(x) 
\end{equation}
where each $f_{i}: \mathbb{R}^{d} \rightarrow \mathbb{R}$ is a smooth objective function. Such kind of problems arise in a large body of machine learning (ML) applications including logistic regression \citep{logistic_prob}, ridge regression \citep{ridge_prob}, least squares problems \citep{svm_prob}, and deep neural networks training. The formulation (\ref{problem}) can express the distributed optimization problem across $n$ agents, where each function $f_i$ represents the objective function of agent $i$, or the optimization problem where each $f_i$ is the objective function associated with the data point $i$. We assume that we work in the Zero Order (ZO) optimization settings, i.e., we do not have access to the derivatives of any function $f_i$ and only functions evaluations are available. Such situation arises in many fields and may occur due to multiple reasons, for example: (i) In many optimization problems, there is only availability of the objective function as the output of a black-box or simulation oracle and hence the absence of derivative information \citep{fifteenth}. (ii) There are situations where the objective function evaluation is done through an old software. Modification of this software to provide first-order derivatives may be too costly or impossible \citep{fifteenth, sixteenth}. (iii) In some situations, derivatives of the objective function are not available but can be extracted. This necessitates access and a good understanding of the simulation code. This process is considered invasive to the simulation code and also very costly in terms of coding efforts \citep{seventeenth}. (IV) In the case of using a commercial software that evaluates only the functions, it is impossible to compute the derivatives because the simulation code is inaccessible \citep{seventeenth, fifteenth}. (V) In the case of having access only to noisy function evaluations, computing derivatives is useless because they are unreliable \citep{fifteenth}. 
ZO optimization has been used in many ML applications, for instance: hyperparameters tuning of ML models \citep{fourth, fifth}, multi-agent target tracking \citep{sixth}, policy optimization in reinforcement learning algorithms \citep{seventh, eight}, maximization of the area under the curve (AUC) \citep{nineth}, automatic speech recognition \citep{tenth}, and the generation of black-box adversarial attacks on deep neural network classifiers \citep{eleventh}. Google Vizier system \citep{twelveth} which is the de facto parameter tuning engine at Google is also based on ZO optimization.

There exist many ZO methods that solve problem (\ref{problem}), most of them approximate the gradient using gradient smoothing techniques such as the popular two-point gradient estimator \citep{sixteenth}. \cite{ghadimi2013stoch} proposed  a stochastic version of the algorithm proposed by \cite{sixteenth} (called \texttt{RSGF}) in the case of function values being stochastic rather than deterministic. \cite{ZO-SVRG} also proposed a ZO stochastic variance reduced method (called \texttt{ZO-SVRG}) based on the minibatch variant of \texttt{SVRG} method \citep{SVRG}. \texttt{ZO-SVRG} can use different gradient estimators namely  RandGradEst, Avg-RandGradEst, and CoordGradEst presented in \cite{ZO-SVRG}.
Another popular class of ZO methods is Direct-Search (\texttt{DS}) methods. They determine the next iterate based solely on function values and does not develop an approximation of the derivatives or build a surrogate model of the the objective function \citep{fifteenth}. For a comprehensive view about classes of ZO  methods we refer the reader to a survey by \cite{DFO_Survey}.
More related to our work, \cite{STP} proposed a ZO method called Stochastic Three Points (\texttt{STP})  which is a general variant of direct search methods. At each training iteration,  \texttt{STP} generates a random search direction $s$ according to a certain probability distribution and updates the iterate as follow:
\[ x = \arg \min \{ f(x-\alpha s), f(x+\alpha s), f(x)\}  \]
where $\alpha > 0 $ is the stepsize. \texttt{STP} is simple, very easy to implement, and has better complexity bounds than deterministic direct search (\texttt{DDS}) methods. Due to its efficiency and simplicity, \texttt{STP} paved the way for other interesting works that are conducted for the first time, namely the first work on importance sampling in the random direct search setting ( \texttt{STP$_{IS}$} method) \citep{IS-STP} and the first ZO method with heavy ball momentum (\texttt{SMTP}) and with importance sampling (\texttt{SMTP$_{IS}$}) \citep{SMTP}. To solve problem (\ref{problem}), \texttt{STP} evaluates $f$ two times at each iteration, which means performing two new computations using all the training data for one update of the parameters. In fact, proceeding in such manner is not all the time efficient. In cases when the total number of training samples is extremely large, such as in the case of large scale machine learning, it becomes computationally expensive to use all the dataset at each iteration of the algorithm. Moreover, training an algorithm using minibatches of the data could be as efficient or better than using the full batch as in the case of \texttt{SGD} \citep{thirteenth}. Motivated by this, we introduced \texttt{MiSTP} to extend \texttt{STP} to the case of using subsets of the data at each iteration of the training process.

We consider in this paper the finite-sum problem as it is largely encountered in ML applications, but our approach is applicable to the more general case where we do not have necessarily the finite-sum structure and only an approximation of the objective function can be computed. Such situation may happen, for instance, in the case where the objective function is the output of a stochastic oracle that provides only noisy/stochastic evaluations.

\subsection{Contributions}

In this section, we highlight the key contributions of this work.
\begin{itemize} 
    \item We propose \texttt{MiSTP} method to extend the \texttt{STP} method \citep{STP} to the case of using only an approximation of the objective function at each iteration.
    \item We analyse our method's complexity in the case of nonconvex and convex objective function.
    \item We present experimental results of the performance of \texttt{MiSTP} on multiple ML tasks, namely on ridge regression, regularized logistic regression, and training of a neural network. We evaluate the performance of \texttt{MiSTP} with different minibatch sizes and in comparison with Stochastic Gradient Descent (\texttt{SGD}) \citep{thirteenth} and other ZO methods.
\end{itemize}

\subsection{Outline}

The paper is organized as follow: In section \ref{sec:MiSTP} we present our \texttt{MiSTP} method. In section \ref{sec:SD} we describe the main assumptions on the random search  directions which ensure the convergence of our method. These assumptions are the same as the ones used for \texttt{STP} \citep{STP}. Then, in section \ref{sec:kl} we formulate the key lemma for the iteration complexity analysis. In section \ref{sec:anal_complexity} we analyze the worst case complexity of our method for smooth nonconvex and convex problems. In section \ref{sec:exper}, we present and discuss our experiments results. In section \ref{sec:ridge_logistic}, we report the results on ridge regression and regularized logistic regression problems, and in section \ref{sec:NNs}, we report the result on neural networks. Finally, we conclude in section \ref{sec:conclusion}.

\subsection{Notation}

Throughout the paper, ${\cal D}$ will denote a probability distribution over $\R^d$. We use
 $\Exp \left[\cdot \right]$ to denote the expectation, $\Exp_{\xi} \left[\cdot \right]$ to denote the expectation over the randomness of $\xi$ conditional to other random quantities,  and for two random variables X and Y, $\Exp[X|Y]$ denotes the expectation of X given Y. $\inner{x}{y} = x^\top y$ corresponds to the inner product of $x$ and $y$. We denote also by 
 $\| \cdot \|_2$ the $\ell_2$-norm, and by $\| \cdot \|_{\cal D}$ a norm dependent on ${\cal D}$. We denote by $f_{\mathcal{B}}$:
  \begin{equation}
        {f}_{\mathcal{B}}(x) = \frac{1}{|\mathcal{B}|} \sum_{i \in \mathcal{B}} f_{i}(x),
    \end{equation}
where  $\mathcal{B}$ is a  subset on indexes chosen from the set $[1,2, \ldots, n]$ and $|\mathcal{B}|$ is its cardinal.

\section{MiSTP method}
\label{sec:MiSTP}

Our {\em minibatch  stochastic three points} (\texttt{MiSTP}) algorithm is formalized below as Algorithm~\ref{alg:MiSTP}.
\begin{algorithm} [H]
\DontPrintSemicolon
\SetAlgoNlRelativeSize{0}
\caption{\bf Minibatch Stochastic Three Points (\texttt{MiSTP})}
\vspace{-2ex}
\label{alg:MiSTP}
\begin{rm}
\begin{description}
\item[]
\item[Initialization] \ \\
Choose $x_0\in \R^d$, positive stepsizes $\{ \alpha_{k}\}_{k \geq 0}$, probability distribution ${\cal D}$ on $\R^d$.
\vspace{1ex}
\item[For $k=0,1,2,\ldots$] \ \\
\vspace{-0ex}
\begin{enumerate}
\item Generate a random vector $s_k\sim {\cal D}$
\item { Choose elements of the subset $\mathcal{B}_k$ u.a.r}
\item Let $x_+ = x_k+\alpha_k s_k$ and $x_- = x_k - \alpha_k s_k$
\item  $x_{k+1} = \arg \min \{f_{\mathcal{B}_k}(x_-), f_{\mathcal{B}_k}(x_+),f_{\mathcal{B}_k}(x_k)\}$
\end{enumerate}
\end{description}
\end{rm}
\end{algorithm}
Due to the randomness of the search directions $s_k$ and the minibatches ${\mathcal{B}_k}$ for $k\ge 0$, the iterates are also random vectors for all $k\ge 1$. The starting point $x_0$ is not random (the initial objective function value $f(x_0)$ is deterministic).
 \begin{lem} \label{unbiasedness}
For $x \in \R^{d}$ such that $x$ is independent from $\mathcal{B}$, i.e., the choice of $x$ does not depend on the choice of $\mathcal{B}$, $f_{\mathcal{B}}(x)$ is an unbiased estimator of  $f(x)$.
\end{lem}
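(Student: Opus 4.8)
The plan is to compute the expectation of $f_{\mathcal{B}}(x)$ over the random draw of the minibatch $\mathcal{B}$ and check that it equals $f(x)$. The central tool is linearity of expectation together with the symmetry of the uniform sampling of $\mathcal{B}$; the independence hypothesis is precisely what lets me treat each value $f_i(x)$ as a fixed (non-random) quantity while averaging over $\mathcal{B}$.

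First I would rewrite the minibatch average as a sum over all $n$ indices using indicator variables:
\[
f_{\mathcal{B}}(x) = \frac{1}{|\mathcal{B}|} \sum_{i \in \mathcal{B}} f_i(x) = \frac{1}{|\mathcal{B}|} \sum_{i=1}^n \1_{\{i \in \mathcal{B}\}}\, f_i(x),
\]
where $\1_{\{i \in \mathcal{B}\}}$ is the indicator that index $i$ was selected. Taking the conditional expectation over $\mathcal{B}$ and invoking linearity, together with the independence of $x$ from $\mathcal{B}$ (so that each $f_i(x)$ can be pulled outside the $\mathcal{B}$-expectation), gives
\[
\Exp_{\mathcal{B}}\left[ f_{\mathcal{B}}(x) \right] = \frac{1}{|\mathcal{B}|} \sum_{i=1}^n \Prob(i \in \mathcal{B})\, f_i(x),
\]
using $\Exp_{\mathcal{B}}\left[\1_{\{i \in \mathcal{B}\}}\right] = \Prob(i \in \mathcal{B})$.

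Next I would evaluate the marginal inclusion probability $\Prob(i \in \mathcal{B})$. Since the $|\mathcal{B}|$ elements of $\mathcal{B}$ are drawn uniformly at random from the index set $\{1,2,\ldots,n\}$, every index is equally likely to be selected, so by symmetry $\Prob(i \in \mathcal{B})$ takes a common value $p$ for all $i$; as exactly $|\mathcal{B}|$ indices are included, $\sum_{i=1}^n \Prob(i \in \mathcal{B}) = n p = |\mathcal{B}|$, giving $p = |\mathcal{B}|/n$. Substituting back,
\[
\Exp_{\mathcal{B}}\left[ f_{\mathcal{B}}(x) \right] = \frac{1}{|\mathcal{B}|} \sum_{i=1}^n \frac{|\mathcal{B}|}{n}\, f_i(x) = \frac{1}{n} \sum_{i=1}^n f_i(x) = f(x),
\]
which is exactly the claim.

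The only genuinely delicate point is the role of the independence assumption: if $x$ were allowed to depend on $\mathcal{B}$, then $f_i(x)$ would itself be a random quantity correlated with the indicator $\1_{\{i \in \mathcal{B}\}}$, and the factorization in the second display would break down. I expect this — rather than the symmetry computation of $\Prob(i \in \mathcal{B})$, which is routine — to be the step demanding the most care, and I would state explicitly that the expectation is taken holding the independent point $x$ fixed, so that only the randomness in $\mathcal{B}$ is averaged out.
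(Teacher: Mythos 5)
Your proof is correct and is essentially the same argument as the paper's: both compute $\Exp_{\mathcal{B}}[f_{\mathcal{B}}(x)]$ directly from the uniformity of the subset sampling, the paper by enumerating all $\tbinom{n}{|\mathcal{B}|}$ subsets and counting that each $f_i(x)$ appears in $\tbinom{n-1}{|\mathcal{B}|-1}$ of them, you by writing the same count as the marginal inclusion probability $\Prob(i\in\mathcal{B}) = \tbinom{n-1}{|\mathcal{B}|-1}/\tbinom{n}{|\mathcal{B}|} = |\mathcal{B}|/n$ via indicator variables and linearity of expectation. Your packaging is arguably cleaner, and your remark about why independence of $x$ from $\mathcal{B}$ is needed (so that $f_i(x)$ is not correlated with $\1_{\{i\in\mathcal{B}\}}$) makes explicit a point the paper's proof uses only implicitly.
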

\begin{proof}
See appendix $A$, section $A.1$.
\end{proof}
Throughout the paper, we assume that $f_i$, (for $i=1,\ldots,n$) is differentiable,  and has $L_i$-Lipschitz gradient. We assume also that $f$ is bounded from below.
\begin{ass}\label{ass:L-smooth} 
The objective function $f_i$, (for $i=1,\ldots,n$) is $L_i$-smooth with $L_i>0$
 and $f$ is bounded from below by $f_*\in \R$. 
  That is, $f_i$ has a Lipschitz continuous gradient with a Lipschitz constant $L_i$:
  \[   
\|\nabla f_i(x) - \nabla f_i(y) \|_2 \le L_i  \|x - y \|_2, \qquad \forall x,y \in \R^d 
  \]
  and $ f(x) \ge f_*$ for all $ x \in \R^d.$
\end{ass} 
\begin{ass} \label{ass:bound_err}
We assume that the variance of $f_{\mathcal{B}}(x)$ is bounded for all $x\in \R^d$:
\[ \Exp_{\mathcal{B}} [(f(x) - f_{\mathcal{B}}(x))^2] < \sigma_{|\mathcal{B}|}^2 < \infty \] 
\end{ass}
This assumption is very common in the stochastic optimization literature \citep[section 6]{DFO_Survey}. Note that we put the subscript $|{\mathcal{B}}|$ in $\sigma_{|\mathcal{B}|}$ to mention that this deviation may be dependent on the minibatch size. Consider, for example, the case of sampling minibatches uniformly with replacement. In such case, the expected deviation between $f$ and $f_{\mathcal{B}}$ satisfy $ \Exp_{\mathcal{B}} [(f(x) - f_{\mathcal{B}}(x))^2] \leq \frac{A}{|\mathcal{B}|}$ for all $x\in \R^d$ independent from $\mathcal{B}$ where $ A = \sup_{x\in \R^d} \frac{1}{n} \sum_{i=1}^n (f_{i}(x) - f(x))^2$ (See appendix $A$, section $A.2$). Note that, given that the function $f(y) = y^2$ is convex on $\mathbb{R}$ and using Jensen's inequality we have: $  (\Exp_{\mathcal{B}} [ |f(x) - f_{\mathcal{B}}(x)|])^2 \leq \Exp_{\mathcal{B}} [(|f(x) - f_{\mathcal{B}}(x)|)^2]$. Therefore, $ \Exp_{\mathcal{B}} [|f(x) - f_{\mathcal{B}}(x)|] \leq \sigma_{|\mathcal{B}|}$.

\subsection{Assumption on the directions}
\label{sec:SD}

Our analysis in the sequel of the paper will be based on the following key assumption.
\begin{ass}\label{ass:P} The probability distribution ${\cal D}$ on $\R^d$ has the following properties: 
\begin{enumerate}
\item The quantity $ \Exp_{s\sim {\cal D}} \; \|s\|_2^2$ is positive and finite. Without loss of generality, in the rest of this paper we assume that it is equal to $1$.
\item There is a constant $\mu_{\cal D}>0$ and norm $\|\cdot\|_{\cal D}$ on $\R^d$ such that for all $g\in \R^d$,
\begin{equation}\label{eq:shs7hs}\Exp_{s\sim {\cal D}}\; |\inner{g}{s}| \geq \mu_{\cal D} \|g\|_{\cal D}.\end{equation}
\end{enumerate}
\end{ass}
As proved in the \texttt{STP} paper \citep{STP}, multiple distributions satisfy this assumption. For example: the uniform distribution on the unit sphere in $\R^d$, the normal distribution with zero mean and $d\times d$ identity as the covariance matrix, the uniform distribution over standard unit basis vectors $\{e_1, . . . , e_d\}$  in $\R^d$, the distribution on $S= { s_1, . . . , s_d}$  where $\{ s_1, . . . , s_d\}$ form an orthonormal basis of $\R^d$. 

\subsection{Key lemma}
 \label{sec:kl}

Now, we establish the key result which will be used to prove the main properties of our algorithm. 
\begin{lem} \label{lem:main1}
If Assumptions~\ref{ass:L-smooth},~\ref{ass:bound_err} , and  ~\ref{ass:P} hold, then for all $k\geq 0$,
\begin{equation}\label{eq:main1-lemma}
\theta_{k+1} \leq \theta_k -\mu_{\cal D} \alpha_k g_k  + \frac{L}{2}\alpha_k^2 + \sigma_{|\mathcal{B}|},
\end{equation}
where $L_{\mathcal{B}_k} = \frac{1}{|\mathcal{B}_k|} \sum_{i \in \mathcal{B}_k} L_i$, $L = \Exp [ L_{\mathcal{B}_k}]= \frac{1}{n}\sum_{i=1}^n L_i$, $\theta_k = \Exp [f(x_k)]$ and $g_k = \Exp [\|\nabla f(x_k)\|_{\cal D}]$, and $|\mathcal{B}_k|$ is the minibatch size .
\end{lem}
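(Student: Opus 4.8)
The plan is to reproduce the one-step descent analysis of \texttt{STP}, but carried out on the minibatch function $f_{\mathcal{B}_k}$ and then transferred to the true objective $f$ using unbiasedness (Lemma~\ref{unbiasedness}) and the variance bound (Assumption~\ref{ass:bound_err}). First I would exploit the defining property of the update: since $x_{k+1}$ minimizes $f_{\mathcal{B}_k}$ over the three candidates, in particular $f_{\mathcal{B}_k}(x_{k+1}) \le f_{\mathcal{B}_k}(x_+)$ and $f_{\mathcal{B}_k}(x_{k+1}) \le f_{\mathcal{B}_k}(x_-)$. Because each $f_i$ is $L_i$-smooth, $f_{\mathcal{B}_k}$ is $L_{\mathcal{B}_k}$-smooth, so expanding $f_{\mathcal{B}_k}(x_k \pm \alpha_k s_k)$ with the descent lemma and keeping whichever of $\pm s_k$ gives the smaller value yields the \texttt{STP}-style bound
\[ f_{\mathcal{B}_k}(x_{k+1}) \le f_{\mathcal{B}_k}(x_k) - \alpha_k |\langle \nabla f_{\mathcal{B}_k}(x_k), s_k\rangle| + \tfrac{L_{\mathcal{B}_k}}{2}\alpha_k^2 \|s_k\|_2^2. \]

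Next I would pass from $f_{\mathcal{B}_k}$ to $f$ on both sides, writing $f_{\mathcal{B}_k}(x_{k+1}) = f(x_{k+1}) + (f_{\mathcal{B}_k}(x_{k+1}) - f(x_{k+1}))$ and similarly at $x_k$, and then take the expectation over $\mathcal{B}_k$ conditional on $s_k$ and the history. Three effects arise: the mismatch at $x_k$ has zero conditional mean by Lemma~\ref{unbiasedness}, since $x_k$ is independent of $\mathcal{B}_k$; by convexity of $v \mapsto |\langle v, s_k\rangle|$ and Jensen's inequality, together with $\Exp_{\mathcal{B}_k}[\nabla f_{\mathcal{B}_k}(x_k)] = \nabla f(x_k)$, the gradient term is lower bounded as $\Exp_{\mathcal{B}_k}|\langle\nabla f_{\mathcal{B}_k}(x_k), s_k\rangle| \ge |\langle\nabla f(x_k), s_k\rangle|$, which is precisely what replaces the minibatch gradient by the true one; and $\Exp_{\mathcal{B}_k}[L_{\mathcal{B}_k}] = L$ produces the stated quadratic coefficient.

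The remaining mismatch term $\Exp_{\mathcal{B}_k}[f(x_{k+1}) - f_{\mathcal{B}_k}(x_{k+1})]$ is the main obstacle, because $x_{k+1}$ is selected \emph{using} $\mathcal{B}_k$ and is therefore not independent of it, so Assumption~\ref{ass:bound_err} cannot be invoked directly at $x_{k+1}$. I would resolve this by noting $x_{k+1} \in \{x_+, x_-, x_k\}$ and bounding the deviation by those at the three candidates, each of which \emph{is} independent of $\mathcal{B}_k$; Assumption~\ref{ass:bound_err} and the Jensen consequence $\Exp_{\mathcal{B}}|f - f_{\mathcal{B}}| \le \sigma_{|\mathcal{B}|}$ then control this by a term of order $\sigma_{|\mathcal{B}|}$.

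Finally I would take the expectation over $s_k$, using $\Exp_{s\sim\mathcal{D}}\|s\|_2^2 = 1$ and $\Exp_{s\sim\mathcal{D}}|\langle g, s\rangle| \ge \mu_{\mathcal{D}}\|g\|_{\mathcal{D}}$ from Assumption~\ref{ass:P}, followed by the total expectation over the history. Writing $\theta_k = \Exp[f(x_k)]$ and $g_k = \Exp[\|\nabla f(x_k)\|_{\mathcal{D}}]$ then gives exactly~\eqref{eq:main1-lemma}. The two genuinely delicate points to keep straight are the order of the conditional expectations (over $\mathcal{B}_k$ first, then $s_k$, then the history) and the use of Jensen plus the unbiasedness of the minibatch gradient, which is what lets the true gradient $\nabla f(x_k)$ appear even though the algorithm only ever queries $f_{\mathcal{B}_k}$.
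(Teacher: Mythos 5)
Your argument follows essentially the same route as the paper's proof: bound $f_{\mathcal{B}_k}(x_{k+1})$ via the descent lemma for $f_{\mathcal{B}_k}$ at $x_k\pm\alpha_k s_k$, transfer to $f$ through the deviation term, average over $\mathcal{B}_k$ using unbiasedness of $f_{\mathcal{B}_k}(x_k)$, $\nabla f_{\mathcal{B}_k}(x_k)$ and $L_{\mathcal{B}_k}$, then average over $s_k$ with Assumption~\ref{ass:P} and finish with the tower property. Two points differ, and both are worth noting. First, you take the minimum over $\pm s_k$ (hence the absolute value) before averaging over $\mathcal{B}_k$ and then use Jensen to pass from $\Exp_{\mathcal{B}_k}\left[|\inner{\nabla f_{\mathcal{B}_k}(x_k)}{s_k}|\right]$ down to $|\inner{\nabla f(x_k)}{s_k}|$; the paper averages each sign choice separately and takes the minimum afterwards. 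Both orders are valid and give the same term. Second, and more substantively, you are right that $\Exp_{\mathcal{B}_k}\left[|f(x_{k+1})-f_{\mathcal{B}_k}(x_{k+1})|\right]$ cannot be bounded by $\sigma_{|\mathcal{B}|}$ directly from Assumption~\ref{ass:bound_err}, since $x_{k+1}$ is selected using $\mathcal{B}_k$. The paper's proof applies the assumption at $x_{k+1}$ anyway, which is a genuine (if small) gap that your write-up exposes. Your fix --- dominating the deviation by the worst of the three candidates $x_k$, $x_k+\alpha_k s_k$, $x_k-\alpha_k s_k$, each of which is independent of $\mathcal{B}_k$ --- is sound, but it produces a constant multiple of $\sigma_{|\mathcal{B}|}$ (e.g.\ $3\sigma_{|\mathcal{B}|}$ by summing the three deviations, or $\sqrt{3}\,\sigma_{|\mathcal{B}|}$ via Cauchy--Schwarz on the squared deviations), not $\sigma_{|\mathcal{B}|}$ itself. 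So what you actually establish is \eqref{eq:main1-lemma} with $\sigma_{|\mathcal{B}|}$ replaced by $c\,\sigma_{|\mathcal{B}|}$ for an absolute constant $c>1$; this propagates harmlessly into Theorems~\ref{thm:ncc2-2} and~\ref{thm:convex3} up to constants, but to obtain the statement exactly as written one would need a uniform (over $x$) strengthening of Assumption~\ref{ass:bound_err} or must accept the paper's implicit leap.
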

\begin{proof}
We have: $ f(x_{k+1}) - f_{\mathcal{B}_{k}}(x_{k+1}) \leq |f(x_{k+1}) - f_{\mathcal{B}_{k}}(x_{k+1})|$ i.e., $ f(x_{k+1}) \leq f_{\mathcal{B}_{k}}(x_{k+1}) + |f(x_{k+1}) - f_{\mathcal{B}_{k}}(x_{k+1})| \inlineeqnum\label{eq1}$ 

We have: $x_{k+1} = \arg \min \{f_{\mathcal{B}_k}(x_k-\alpha_k s_k), f_{\mathcal{B}_k}(x_k+\alpha_k s_k),f_{\mathcal{B}_k}(x_k)\}$, therefore: $f_{\mathcal{B}_{k}}(x_{k+1}) \leq f_{\mathcal{B}_k}(x_k+\alpha_k s_k) \inlineeqnum\label{eq3} $. From $L_i$-smoothness of $f_i$ we have:
\[f_{i}(x_k+\alpha_k s_k) \leq f_{i}(x_k)+ \inner{\nabla f_{i}(x_k)}{\alpha_k s_k} + \frac{L_{i}}{2}\|\alpha_k s_k\|_2^2  \]
By summing over $f_i$ for $i \in \mathcal{B}_k$ and multiplying by $1/|\mathcal{B}_k|$ we get:
\begin{eqnarray*}
f_{\mathcal{B}_k}(x_k+\alpha_k s_k) &\leq& f_{\mathcal{B}_k}(x_k)+ \inner{\nabla f_{\mathcal{B}_k}(x_k)}{\alpha_k s_k} + \frac{L_{\mathcal{B}_k}}{2}\|\alpha_k s_k\|_2^2 \\
& =& f_{\mathcal{B}_k}(x_k) + \alpha_k \inner{\nabla f_{\mathcal{B}_k}(x_k)}{s_k} + \frac{L_{\mathcal{B}_k}}{2}\alpha_k^2\|s_k\|_2^2   \inlineeqnum\label{eq2}
\end{eqnarray*}
By using inequalities (\ref{eq1}), (\ref{eq3}), and (\ref{eq2}) we get:
\[  f(x_{k+1}) \leq  f_{\mathcal{B}_k}(x_k) + \alpha_k \inner{\nabla f_{\mathcal{B}_k}(x_k)}{s_k} + \frac{L_{\mathcal{B}_k}}{2}\alpha_k^2\|s_k\|_2^2 + e_{\mathcal{B}_k}^{k+1}   \]
where $e_{\mathcal{B}_k}^{k+1} =  |f(x_{k+1}) - f_{\mathcal{B}_{k}}(x_{k+1})|$

By  taking the expectation conditioned on $x_k$ and $s_k$ and using assumption \ref{ass:bound_err}
we get:
\[  \Exp [f(x_{k+1}) | x_k, s_k] \leq  f(x_k) + \alpha_k \inner{\nabla f(x_k)}{s_k} + \frac{L}{2}\alpha_k^2\|s_k\|_2^2 + \sigma_{|\mathcal{B}|}  \]
Similarly, we can get (see details in appendix $A$, section $A.3$):
\[  \Exp [f(x_{k+1}) | x_k, s_k] \leq  f(x_k) - \alpha_k \inner{\nabla f(x_k)}{s_k} + \frac{L}{2}\alpha_k^2\|s_k\|_2^2 +  \sigma_{|\mathcal{B}|}   \]
From the two inequalities above we conclude:
\[  \Exp [f(x_{k+1}) | x_k, s_k] \leq  f(x_k) - \alpha_k |\inner{\nabla f(x_k)}{s_k}| + \frac{L}{2}\alpha_k^2\|s_k\|_2^2 +  \sigma_{|\mathcal{B}|}   \]
By  taking the expectation over $s_k$ and using inequality (\ref{eq:shs7hs}) we get:
\[  \Exp [f(x_{k+1}) | x_k] \leq  f(x_k) - \alpha_k \mu_{\cal D} \|\nabla f(x_k)\|_{\cal D} + \frac{L}{2}\alpha_k^2\ +  \sigma_{|\mathcal{B}|}   \]
By taking expectation in the above inequality and due to the tower property of the expectation we get:
\[  \Exp [f(x_{k+1})] \leq \Exp [ f(x_k)] - \alpha_k \mu_{\cal D} \Exp [\|\nabla f(x_k)\|_{\cal D}] + \frac{L}{2}\alpha_k^2\ +  \sigma_{|\mathcal{B}|}   \]
\end{proof}

\section{Complexity analysis}
\label{sec:anal_complexity}

We first state, in theorem \ref{thm:ncc2-2}, the most general complexity result of \texttt{MiSTP} where we do not make any additional assumptions on the objective functions besides smoothness of $f_i$, for $i=1,\ldots,n$, and boundedness of $f$. The proofs follow the same reasoning as the ones in \texttt{STP} \citep{STP}, we defer them to the appendix.
\begin{thm} [nonconvex case] \label{thm:ncc2-2} Let Assumptions ~\ref{ass:L-smooth}, ~\ref{ass:bound_err} , and ~\ref{ass:P}  be satisfied and $\sigma_{|\cal B|} < \frac{(\mu_{\cal D} \epsilon)^2}{2L}$.  Choose a fixed stepsize $\alpha_k=\alpha$ with  $(\mu_{\cal D} \epsilon - \sqrt{(\mu_{\cal D} \epsilon)^2 - 2L\sigma_{|\cal B|}})/L<\alpha <  (\mu_{\cal D} \epsilon + \sqrt{(\mu_{\cal D} \epsilon)^2 - 2L\sigma_{|\cal B|}})/L$, If
\begin{equation}\label{eq:kineq31}K\geq k(\e)\eqdef  \left \lceil \frac{f(x_0)-f_*}{\mu_{\cal D}\e \alpha- \tfrac{L}{2}\alpha^2-  \sigma_{|\mathcal{B}|}} \right \rceil -1,\end{equation}
then  $\min_{k=0,1,\dots,K} \Exp \left[ \|\nabla f(x_k)\|_{\cal D} \right] \leq \e.$  In particular, we have: $\alpha_{optimal} = \mu_{\cal D} \e / L$ 
\end{thm}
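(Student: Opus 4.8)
The plan is to argue by contradiction using the per-iteration descent inequality \eqref{eq:main1-lemma} of Lemma~\ref{lem:main1}. Suppose, for contradiction, that $g_k = \Exp[\|\nabla f(x_k)\|_{\cal D}] > \epsilon$ for every index $k \in \{0,1,\dots,K\}$. Specializing \eqref{eq:main1-lemma} to the constant stepsize $\alpha_k = \alpha$ and inserting this lower bound on $g_k$ (legitimate since $\mu_{\cal D},\alpha>0$) yields the strict one-step decrease
\[
\theta_{k+1} < \theta_k - \left(\mu_{\cal D}\epsilon\alpha - \tfrac{L}{2}\alpha^2 - \sigma_{|\mathcal{B}|}\right), \qquad k=0,1,\dots,K.
\]
I would abbreviate the parenthesized quantity as $C \eqdef \mu_{\cal D}\epsilon\alpha - \tfrac{L}{2}\alpha^2 - \sigma_{|\mathcal{B}|}$, the per-step progress guaranteed by the descent bound.

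The essential preliminary step is to verify that $C>0$ under the stated hypotheses. The condition $\sigma_{|\cal B|} < (\mu_{\cal D}\epsilon)^2/(2L)$ makes the discriminant $(\mu_{\cal D}\epsilon)^2 - 2L\sigma_{|\cal B|}$ strictly positive, so the upward-opening parabola $\tfrac{L}{2}\alpha^2 - \mu_{\cal D}\epsilon\alpha + \sigma_{|\cal B|}$ has two distinct real roots, and the quadratic formula shows these roots are exactly the endpoints of the admissible stepsize interval in the statement. Since such a parabola is strictly negative between its roots, any $\alpha$ in the prescribed open interval gives $\tfrac{L}{2}\alpha^2 - \mu_{\cal D}\epsilon\alpha + \sigma_{|\cal B|} < 0$, i.e. $C>0$. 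This positivity is what makes the telescoping below produce a genuine contradiction.

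With $C>0$, I would telescope the strict inequalities over $k=0,\dots,K$ to get $\theta_{K+1} < \theta_0 - (K+1)C$. Using $\theta_0 = f(x_0)$ (the start is deterministic) together with the lower bound $\theta_{K+1} = \Exp[f(x_{K+1})] \ge f_*$ from Assumption~\ref{ass:L-smooth}, this rearranges to $K+1 < (f(x_0)-f_*)/C$, hence
\[
K < \frac{f(x_0)-f_*}{C} - 1 \le \left\lceil \frac{f(x_0)-f_*}{C}\right\rceil - 1 = k(\epsilon),
\]
so $K \le k(\epsilon)-1$, contradicting the hypothesis $K \ge k(\epsilon)$. Therefore at least one index $k\in\{0,\dots,K\}$ satisfies $g_k \le \epsilon$, which is exactly $\min_{k=0,\dots,K}\Exp[\|\nabla f(x_k)\|_{\cal D}] \le \epsilon$. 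For the optimal stepsize, I would minimize the complexity bound $k(\epsilon)$ by maximizing $C$ over $\alpha$: solving $\tfrac{d}{d\alpha}\left(\mu_{\cal D}\epsilon\alpha - \tfrac{L}{2}\alpha^2\right) = \mu_{\cal D}\epsilon - L\alpha = 0$ gives $\alpha_{optimal} = \mu_{\cal D}\epsilon/L$, the midpoint of the admissible interval.

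The argument is otherwise routine; the only points demanding care are the strict-versus-nonstrict inequalities interacting with the ceiling in $k(\epsilon)$ (a nonstrict one-step bound would only give $K\le k(\epsilon)$, which does not contradict $K\ge k(\epsilon)$), and confirming that the prescribed stepsize window coincides precisely with the region where the descent coefficient $C$ is positive.
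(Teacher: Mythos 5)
Your proposal is correct and follows essentially the same route as the paper's proof: assume for contradiction that $g_k>\e$ for all $k\le K$, apply Lemma~\ref{lem:main1} with constant stepsize, telescope, and use $\theta_{K+1}\ge f_*$ together with \eqref{eq:kineq31} to reach a contradiction. Your explicit check that the descent coefficient $C=\mu_{\cal D}\e\alpha-\tfrac{L}{2}\alpha^2-\sigma_{|\mathcal{B}|}$ is positive precisely on the prescribed stepsize interval, and your derivation of $\alpha_{optimal}$ by maximizing $C$, are details the paper leaves implicit, but they do not change the argument.
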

\begin{proof} see appendix $A$, section $A.4$
\end{proof}
We now state the complexity of \texttt{MiSTP} in the case of convex $f$. To do so, we add the following assumption:
\begin{ass} \label{ass:levelset2} We assume that $f$ is convex, has a minimizer $x_*$, and has bounded level set at $x_0$: 
\[R_0 \eqdef \max \{ \|x-x_*\|_{\cal D}^*\;:\; f(x)\leq f(x_0)\}< +\infty,\]
where $\|\xi\|_{\cal D}^* \eqdef \max\{\langle\xi, x\rangle\mid \|x\|_{\cal D} \leq 1 \}$ defines the dual norm to $\|\cdot\|_{\cal D}$.
\end{ass}
Note that if the above assumption holds, then whenever $f(x)\leq f(x_0)$, we get $f(x)-f(x_*)\leq \inner{\nabla f(x)}{x-x_*}  = \|\nabla f(x)\|_{\cal D} (x-x_*)^T \nabla f(x) /\|\nabla f(x)\|_{\cal D} \leq \|\nabla f(x)\|_{\cal D}\|x-x_*\|_{\cal D}^* \leq R_0 \|\nabla f(x)\|_{\cal D}$. That is, 
\begin{equation}\label{eq:s8jd7dh2}\|\nabla f(x)\|_{\cal D} \geq \frac{f(x)-f(x_*)}{R_0}.\end{equation}
\begin{thm} [convex case] \label{thm:convex3}
\label{thm:aasdfsafa}
Let Assumptions~\ref{ass:L-smooth}, ~\ref{ass:bound_err} ,~\ref{ass:P}, ~and~\ref{ass:levelset2} be satisfied. Let $\e>0$ and $\sigma_{|\cal B|} < \frac{(\mu_{\cal D} \epsilon)^2}{4LR_0^2}$, choose constant stepsize $\alpha_k =\alpha= \tfrac{\e \mu_{\cal D} }{LR_0}$, If
\begin{equation}\label{eq:isjsssus2}K\geq \frac{LR_0^2}{\mu_{\cal D}^2  \e} \log \left(\frac{4(f(x_0)-f(x_*))}{\e}\right),
\end{equation}
then  $ \Exp \left[f(x_K) - f(x_*) \right] \leq \e.$
\end{thm}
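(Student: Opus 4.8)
The plan is to turn the one-step inequality of Lemma~\ref{lem:main1} into a linear contraction on the expected optimality gap $r_k \eqdef \theta_k - f(x_*) = \Exp[f(x_k)] - f(x_*)$ and then unroll it. First I would invoke the convexity consequence (\ref{eq:s8jd7dh2}), that is $\|\nabla f(x_k)\|_{\cal D} \geq (f(x_k)-f(x_*))/R_0$, and take expectations to obtain $g_k = \Exp[\|\nabla f(x_k)\|_{\cal D}] \geq r_k / R_0$. Substituting this into (\ref{eq:main1-lemma}) with the constant stepsize $\alpha = \e\mu_{\cal D}/(LR_0)$ gives
\[ r_{k+1} \leq \left(1 - \frac{\mu_{\cal D}\alpha}{R_0}\right) r_k + \frac{L}{2}\alpha^2 + \sigma_{|\cal B|}. \]
Writing $\rho \eqdef \mu_{\cal D}\alpha/R_0 = \e\mu_{\cal D}^2/(LR_0^2)$, a direct substitution shows $\frac{L}{2}\alpha^2 = \tfrac{\e}{2}\rho$, while the hypothesis $\sigma_{|\cal B|} < (\mu_{\cal D}\e)^2/(4LR_0^2) = \tfrac{\e}{4}\rho$; hence the additive constant $C \eqdef \frac{L}{2}\alpha^2 + \sigma_{|\cal B|}$ obeys $C/\rho < \tfrac{3}{4}\e$.

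Second, I would unroll $r_{k+1} \leq (1-\rho) r_k + C$ to get $r_K \leq (1-\rho)^K r_0 + C\sum_{j=0}^{K-1}(1-\rho)^j \leq (1-\rho)^K r_0 + C/\rho$, and use $(1-\rho)^K \leq e^{-\rho K}$. Since $C/\rho < \tfrac{3}{4}\e$, it suffices to force $e^{-\rho K} r_0 \leq \tfrac{1}{4}\e$; as $r_0 = f(x_0) - f(x_*)$ is deterministic, this rearranges to $K \geq \rho^{-1}\log(4 r_0/\e) = \frac{LR_0^2}{\mu_{\cal D}^2\e}\log\!\left(\frac{4(f(x_0)-f(x_*))}{\e}\right)$, which is precisely (\ref{eq:isjsssus2}) and yields $\Exp[f(x_K)-f(x_*)] = r_K \leq \e$.

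The step I expect to be the main obstacle is the inequality $g_k \geq r_k/R_0$: the bound (\ref{eq:s8jd7dh2}) holds only at points of the level set $\{x : f(x) \leq f(x_0)\}$, so I must guarantee that every iterate $x_k$ stays in it. In exact \texttt{STP} this is automatic, since $x_k$ is chosen as a minimizing candidate for $f$ itself and therefore $f(x_{k+1}) \leq f(x_k) \leq \cdots \leq f(x_0)$; but \texttt{MiSTP} selects $x_{k+1}$ by comparing values of $f_{\mathcal{B}_k}$, so pointwise monotonicity of $f$ along the trajectory is no longer guaranteed. Justifying level-set membership (almost surely, or in a suitably averaged sense under the prescribed stepsize and the variance bound on $\sigma_{|\cal B|}$), or else re-deriving the convexity descent in a form that tolerates brief excursions outside the level set, is the delicate part; the remaining contraction-and-unrolling is the routine computation sketched above.
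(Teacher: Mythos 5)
Your argument is essentially identical to the paper's: it substitutes the convexity bound (\ref{eq:s8jd7dh2}) into the one-step recursion from Lemma~\ref{lem:main1}, sets $r_k=\theta_k-f(x_*)$ and $\delta=1-\mu_{\cal D}\alpha/R_0$, unrolls the resulting contraction, bounds the geometric sum by $\frac{1}{1-\delta}$, and splits the $\e$ budget exactly as you do ($\e/4$ from the transient term, $\e/2$ from the $\frac{L}{2}\alpha^2$ term, $\e/4$ from the $\sigma_{|\cal B|}$ term). The obstacle you flag --- that (\ref{eq:s8jd7dh2}) is only guaranteed on the level set $\{x: f(x)\le f(x_0)\}$, and \texttt{MiSTP} iterates need not stay there since the argmin is taken over $f_{\mathcal{B}_k}$ rather than $f$ --- is a genuine issue, but the paper's own proof does not address it either: it applies (\ref{eq:s8jd7dh2}) to every iterate without verifying level-set membership, so your proposal is no weaker than the published argument on this point.
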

\begin{proof} see appendix $A$, section $A.5$
\end{proof}

\section{Numerical results} 
\label{sec:exper}

In this section, we report the results of some experiments conducted in order to evaluate the efficiency of  \texttt{MiSTP}. All the presented results are averaged over 10 runs of the algorithm and the confidence intervals (the shaded region in the graphs) are given by $\mu \pm \frac{\sigma}{2}$ where $\mu$ is the mean and $\sigma$ is the standard deviation. For each minibatch size, we choose the learning rate $\alpha$ by performing a grid search on the values 1,0.1,0.01,... and select the one that gives the best performance. $\tau$ denotes  the minibatch size, i.e., $\tau = |\mathcal{B}|$. In all our implementations, the starting point $x_{0}$ is sampled from the standard Gaussian distribution. The distribution  $\mathcal{D}$ used to sample search directions, unless specified otherwise, is the normal distribution with zero mean and $d \times  d$ identity as the covariance matrix.

\subsection{\texttt{MiSTP} on ridge regression and regularized logistic regression problems}
\label{sec:ridge_logistic}

We performed experiments on  ridge regression and regularized logistic regression. They are problems with strongly convex objective function $f$.

In the case of ridge regression we solve:
{ \small \begin{equation}
    \min_{x \in \mathbb{R}^{d}} \big[ f(x) =  \frac{1}{2n} \sum_{i=1}^{n} (A[i,:]x-y_{i})^{2}  + \frac{\lambda}{2} \left\| x \right\|_{2}^{2} \big]
\end{equation} }
and in the case of regularized logistic regression we solve:
{ \small \begin{equation}
    \min_{x \in \mathbb{R}^{d}} \big[ f(x) =  \frac{1}{2n} \sum_{i=1}^{n} \ln (1+\exp (-y_{i}A[i,:]x))  + \frac{\lambda}{2} \left\| x \right\|_{2}^{2} \big]
\end{equation} }
In both problems $ A \in \mathbb{R}^{n \times d} $, $ y \in \mathbb{R}^{n}$  are the given data and $\lambda >0 $ is the regularization parameter. For logistic regression: $y \in \{ -1,1 \}^{n} $ and all the values in the first column of $A$ are equal to 1. \footnote{It is known that the value of the first feature must be 1 for all the training inputs when performing logistic regression. When using LIBSVM datasets we add this column to the data.} For both problems we set $\lambda=1/n$. The experiments of this section are conducted using LIBSVM datasets \citep{LIBSVM}.

In section \ref{subsec:minibatches}, we evaluate the performance of \texttt{MiSTP} when using different minibatch sizes. In section \ref{subsec:MiSTP_SGD} we evaluate the performance of \texttt{MiSTP} compared to \texttt{SGD}, and in section \ref{subsec:comparison_other} we compare the performance of \texttt{MiSTP} with some other ZO methods.

\subsubsection{\texttt{MiSTP} with different  minibatch sizes }
\label{subsec:minibatches}
Figures \ref{fig:rigde_minibatches} and \ref{fig:logistic_minibatches} show the performance of \texttt{MiSTP} when using different minibatch sizes. 
\begin{figure} 
    \centering
    \begin{tabular}{cc}
       \includegraphics[width=0.42\textwidth]{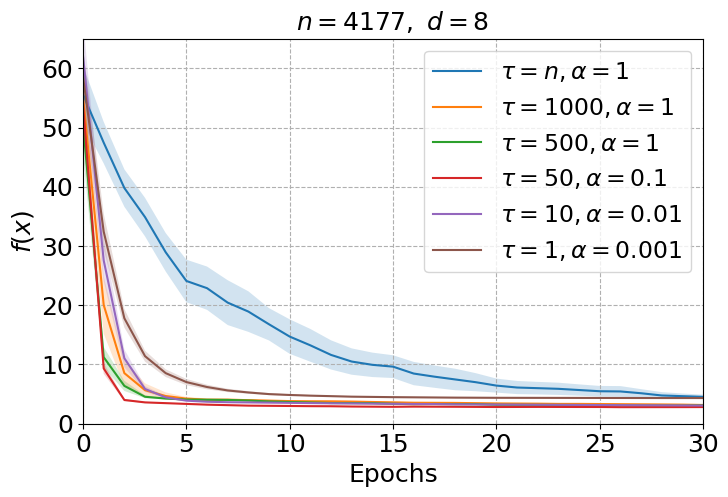} & 
       \includegraphics[width=0.42\textwidth]{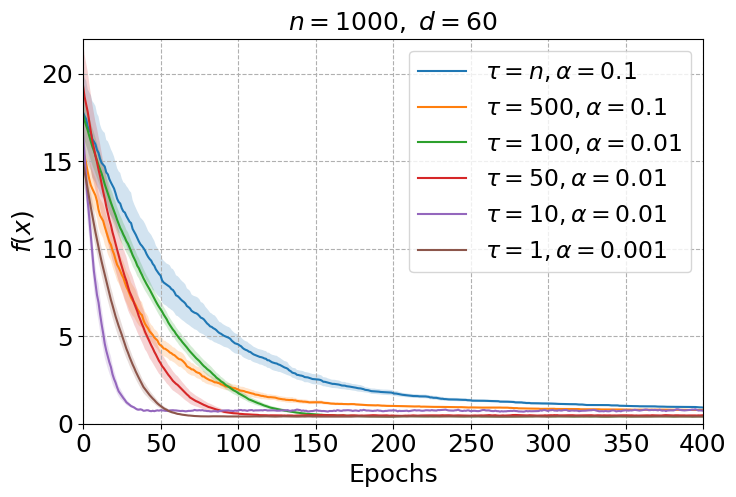}
    \end{tabular}
     \caption{Performance of \texttt{MiSTP} with different minibatch sizes on ridge regression problem. On the left, the abalone dataset. On the right, the splice dataset.}
    \label{fig:rigde_minibatches}
\end{figure}
\begin{figure}
    \centering
    \begin{tabular}{cc}
       \includegraphics[width=0.42\textwidth]{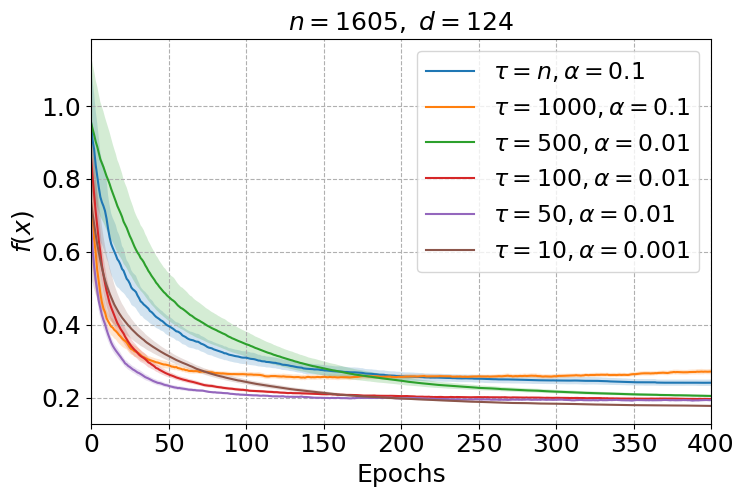} & 
       \includegraphics[width=0.42\textwidth]{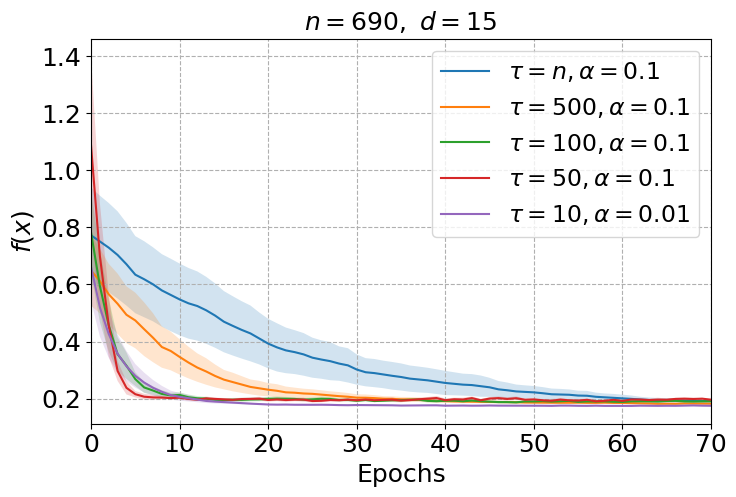}
    \end{tabular}
     \caption{Performance of \texttt{MiSTP} with different minibatch sizes on regularized logistic regression problem. On the left, the a1a dataset. On the right, the australian dataset.}
    \label{fig:logistic_minibatches}
\end{figure}
From these figures we see good performance of \texttt{MiSTP}. For different minibatch sizes, it generally converges faster than the original \texttt{STP} (the full batch) in terms of number of epochs. We notice also that there is an optimal minibatch size that gives the best performance for each dataset: among the tested values, for the 'abalone' dataset it is equal to $ 50$, for 'splice' dataset it is $ 1$, for 'a1a' and 'australian' datasets it is $ 10$. All those optimal minibatch sizes are just a very small subset of the whole dataset which results in less computation at each iteration. Those results also show that we could get a good performance when using only an approximation of the objective function using a small subset of the data rather than the exact function evaluations. 

\subsubsection{\texttt{MiSTP} vs. \texttt{SGD} }
\label{subsec:MiSTP_SGD}

In this section we report some results of experiments conducted in order to compare the performance of \texttt{MiSTP} to \texttt{SGD}. For both methods, we used the same starting point at each run and the same minibatch at each iteration.

Figures \ref{fig:rigde} and \ref{fig:logistic} show results of experiments on  ridge regression and regularized logistic regression problems respectively. More results are presented in Appendix B.
\begin{figure} 
    \centering
    \begin{tabular}{cc}
       \includegraphics[width=0.32\textwidth]{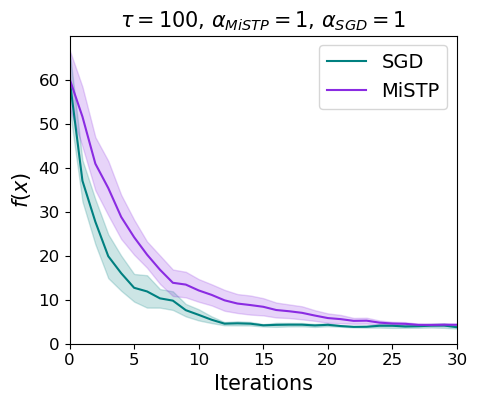} & 
       \includegraphics[width=0.32\textwidth]{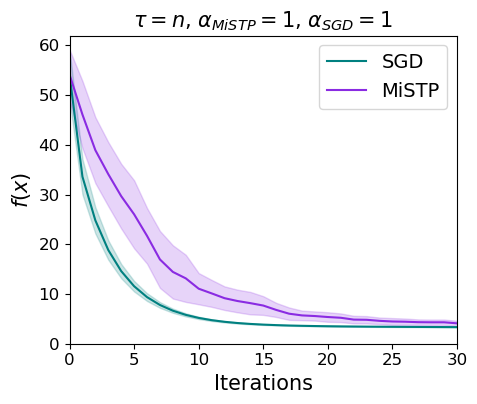} \\
       \includegraphics[width=0.32\textwidth]{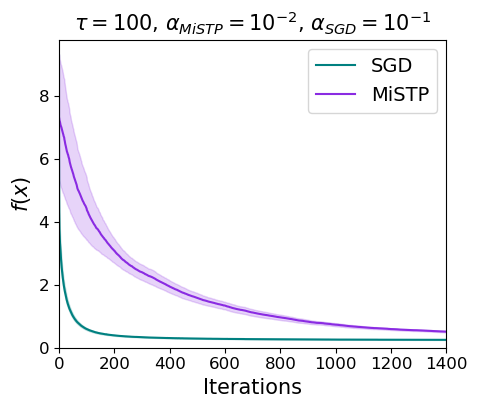}  & 
       \includegraphics[width=0.32\textwidth]{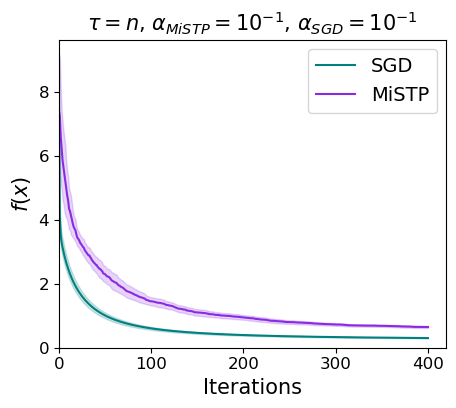}
    \end{tabular}
     \caption{Performance of \texttt{MiSTP} and \texttt{SGD} on ridge regression problem using real data from LIBSVM. Above, abalone dataset: $n=4177$ and $d=8$. Below, a1a dataset :$n=1605$ and $d=123$.}
    \label{fig:rigde}
\end{figure}
\begin{figure}
    \centering
    \begin{tabular}{cc}
       \includegraphics[width=0.32\textwidth]{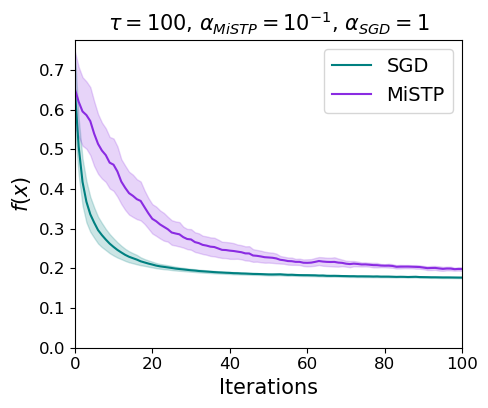} & 
       \includegraphics[width=0.32\textwidth]{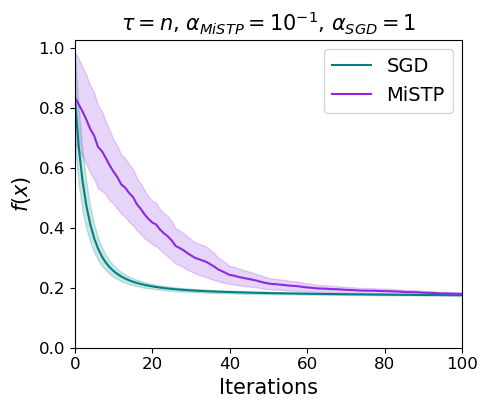} \\
       \includegraphics[width=0.32\textwidth]{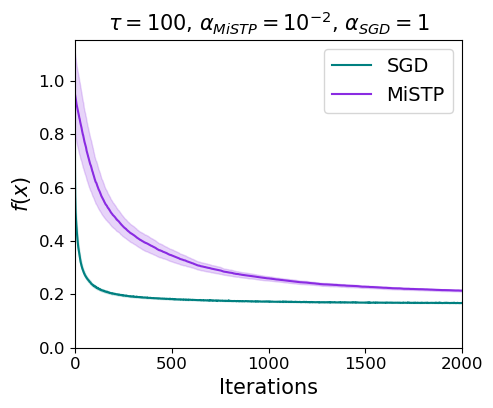}  & 
       \includegraphics[width=0.32\textwidth]{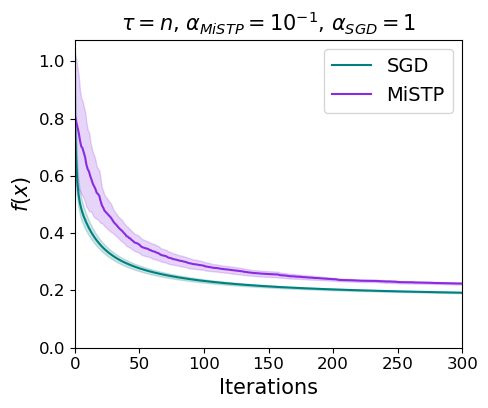}
    \end{tabular}
     \caption{Performance of \texttt{MiSTP} and \texttt{SGD} on regularized logistic regression problem using real data from LIBSVM. Above, australian dataset : $n=690$ and $d=15$. Below, a1a dataset : $n=1605$ and $d=124$.}
    \label{fig:logistic}
\end{figure}
From these experiments we see that in most of the cases, \texttt{MiSTP} is able to converge to a good approximation or exactly the same solution as \texttt{SGD}. \texttt{MiSTP} also gives competitive performance to \texttt{SGD} when the dimension of the problem is small, i.e., $d$ is less or around 10. When the dimension of the problem is big, i.e., $d$ is of order of tens,  \texttt{MiSTP} needs more iterations compared to \texttt{SGD} to converge to just an approximation of the solution. In all cases,  we see that the number of iterations that \texttt{MiSTP} needs to converge increases as the batch size decreases. It also increases as the dimension of the problem increases while \texttt{SGD} is slightly affected by this. 
In Appendix B, we report the values of the approximation $f_{\mathcal{B}}$ alongside $f$ for multiple minibatch sizes. We can see that starting from a given batch size (generally when $\tau \geq 500$ for the given datasets) $f_{\mathcal{B}}$ is a good approximation of $f$ which shows that we can get the same results when training a model with only a subset of the data as when using all available samples. Consequently, this results in less computations.  

\subsubsection{\texttt{MiSTP} vs. other zero-order methods} \label{subsec:comparison_other}

In this section, we compare the performance of \texttt{MiSTP} with three other ZO optimization methods. The first is \texttt{RSGF}, proposed by \cite{ghadimi2013stoch}. In this method, at iteration $k$, the iterate is updated as follow:
{\small \begin{equation}
    x_{k+1} = x_k - \alpha_k \frac{f_{\mathcal{B}_k}(x_k + \mu_k s_k) - f_{\mathcal{B}_k}(x_k)}{\mu_k} s_k
\end{equation} }
where $\mu_k \in (0,1)$ is the finite differences parameter, $\alpha_k$ is the stepsize, $s_k$ is a random vector following the uniform distribution on the unit sphere, and $\mathcal{B}_k$ is a randomly chosen minibatch. The second is \texttt{ZO-SVRG} proposed by \citet[Algorithm 2]{ZO-SVRG}. For this method, at iteration $k$, the gradient estimation of $f_{\mathcal{B}_k}$ at $x_k$ is given by:
{\small \begin{equation}
    \hat{\nabla} f_{\mathcal{B}_k}(x_k) = \frac{d}{\mu} ( f_{\mathcal{B}_k} (x_k + \mu s_k) - f_{\mathcal{B}_k} (x_k)) s_k
\end{equation} }
where $\mu > 0$ is the smoothing parameter and $s_k$ is a random direction drawn from the uniform distribution over the unit sphere. And the last is \texttt{ZO-CD} (ZO coordinates descent method), in this method, at iteration $k$, the iterate is updated as follow:
{\small \begin{equation}
    x_{k+1} = x_k - \alpha_k g_{\mathcal{B}_k}, \qquad g_{\mathcal{B}_k} = \sum_{i=1}^d \frac{f_{\mathcal{B}_k} (x_k + \mu e_i) - f_{\mathcal{B}_k} (x_k - \mu e_i)}{2\mu}e_i
\end{equation} }
where $\mu>0$ is a smoothing parameter and $e_i \in \mathbb{R}^d$ for $i \in [d]$ is a standard basis vector with $1$ at its $i$th coordinate and $0$ elsewhere.

The distribution $\mathcal{D}$ used here for \texttt{MiSTP} is the uniform distribution on the unit sphere. For \texttt{RSGF}, \texttt{ZO-SVRG}, and \texttt{ZO-CD}, we chose $\mu_k = \mu = 10^{-4}$

Figure (\ref{fig:compa_others}) shows the objective function values against the number of function queries of the different ZO methods using different minibatch sizes. Note that one function query is the evaluation of one $f_i$ for $i \in [n]$ at a given point.
From figure (\ref{fig:compa_others}) we see that, on the ridge regression problem, \texttt{MiSTP}, \texttt{RSGF}, and \texttt{ZO-CD} show competitive performance while \texttt{ZO-SVRG} needs much more function queries to converge. On the regularized logistic regression problem, \texttt{MiSTP} outperforms all the other methods. \texttt{RSGF}, \texttt{ZO-CD}, and \texttt{ZO-SVRG} need almost $5$ times function queries to converge than \texttt{MiSTP} for $\tau = 100$ and around $2$ times more function queries than \texttt{MiSTP} for $\tau = 50$.

\begin{figure} 
    \centering
    \begin{tabular}{cc}
       \includegraphics[width=0.33\textwidth]{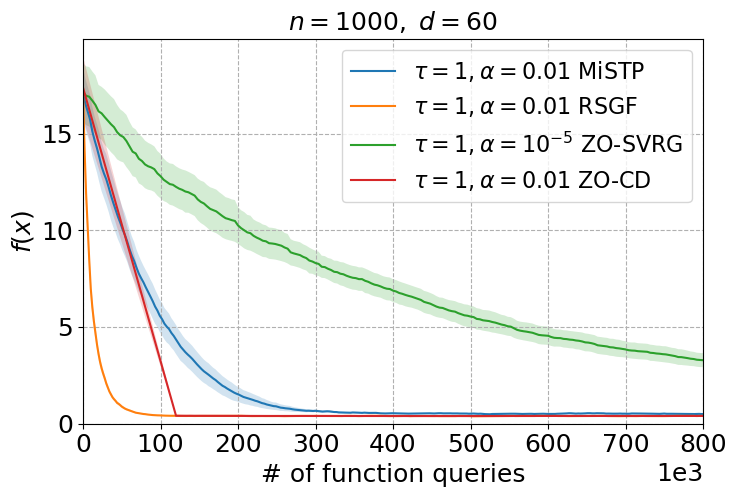} &   \includegraphics[width=0.33\textwidth]{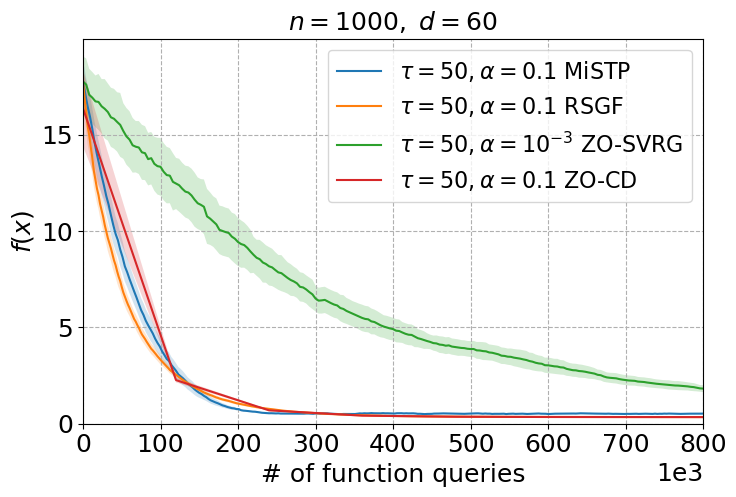}  \\
       \includegraphics[width=0.33\textwidth]{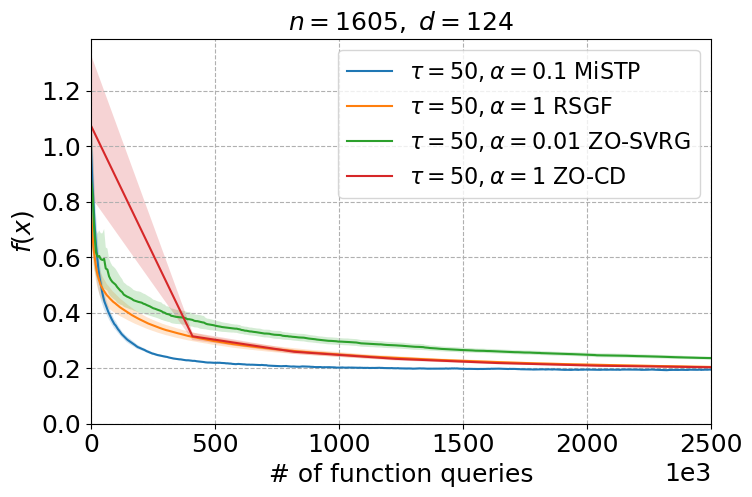} & \includegraphics[width=0.33\textwidth]{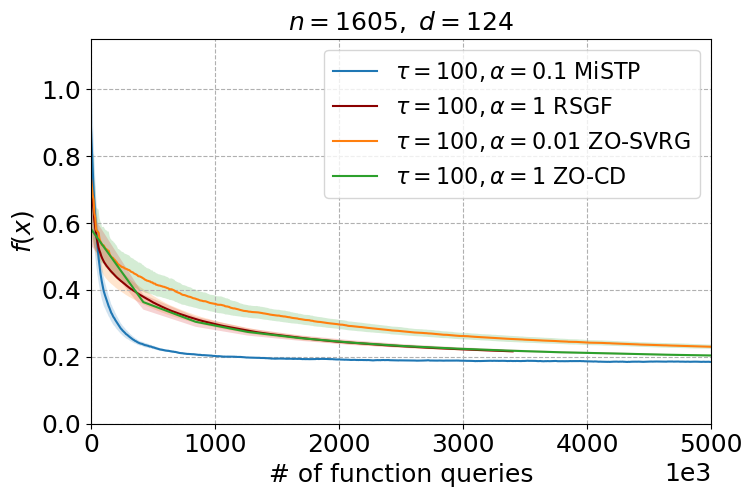}
    \end{tabular}
     \caption{Comparison of \texttt{MiSTP}, \texttt{RSGF}, \texttt{ZO-SVRG}, and \texttt{ZO-CD}. Above: ridge regression problem using the splice dataset. Below: regularized logistic regression problem using the a1a dataset.}
    \label{fig:compa_others}
\end{figure}

\subsection{\texttt{MiSTP} in neural networks}
\label{sec:NNs}

Figure \ref{fig:NN1} shows the results of experiments using \texttt{MiSTP} as the optimizer in a multi-layer neural network (NN) for MNIST digit \citep{MNIST} classification with different minibatch sizes. The architecture we used has three fully-connected layers of size 256, 128, 10, with ReLU activation after the first two layers and a Softmax activation function after the last layer. The loss function is the categorical cross entropy. 
From figure \ref{fig:NN1} we observe that the minibatch size $6000$ outperforms the  minibatch size $3000$ and the full batch, it converges faster to better accuracy and loss values. 
$\tau = 6000$ is $1/10$ of the dataset (we used the whole MNIST dataset which has $60 000$ samples), it leads to less computation time at each iteration than using all the $60000$ samples. Besides it largely outperforms the full batch. Those results prove that minibatch training is more efficient than the full batch training and that we can find an optimal minibatch size that leads to efficient training of an NN in terms of performance and computation effort.

\begin{figure} [H]
    \centering
    \begin{tabular}{cc}
       \includegraphics[width=0.36\textwidth]{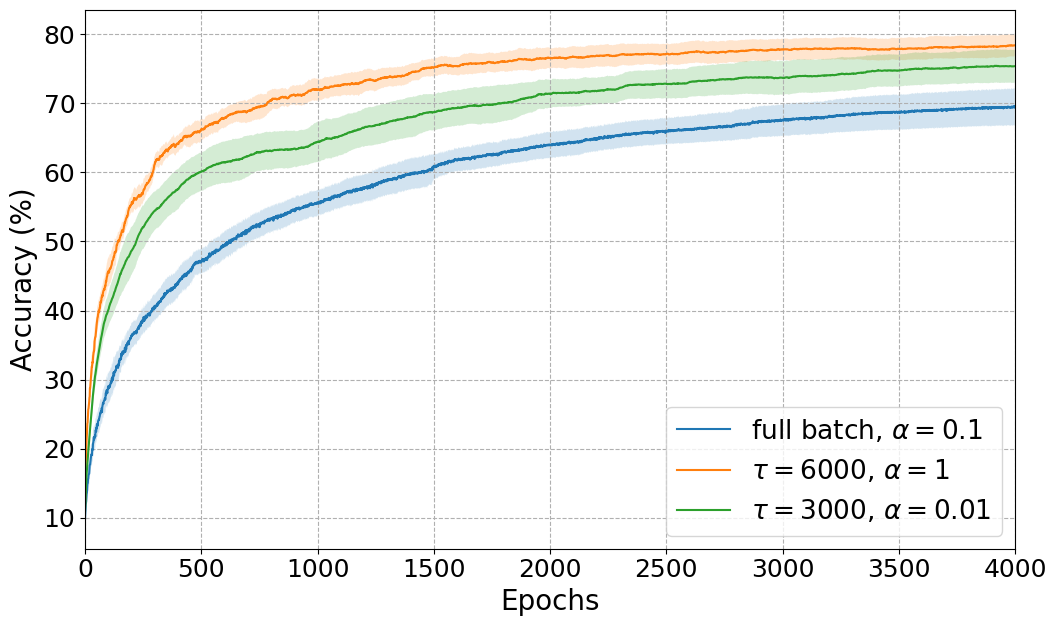} &   \includegraphics[width=0.36\textwidth]{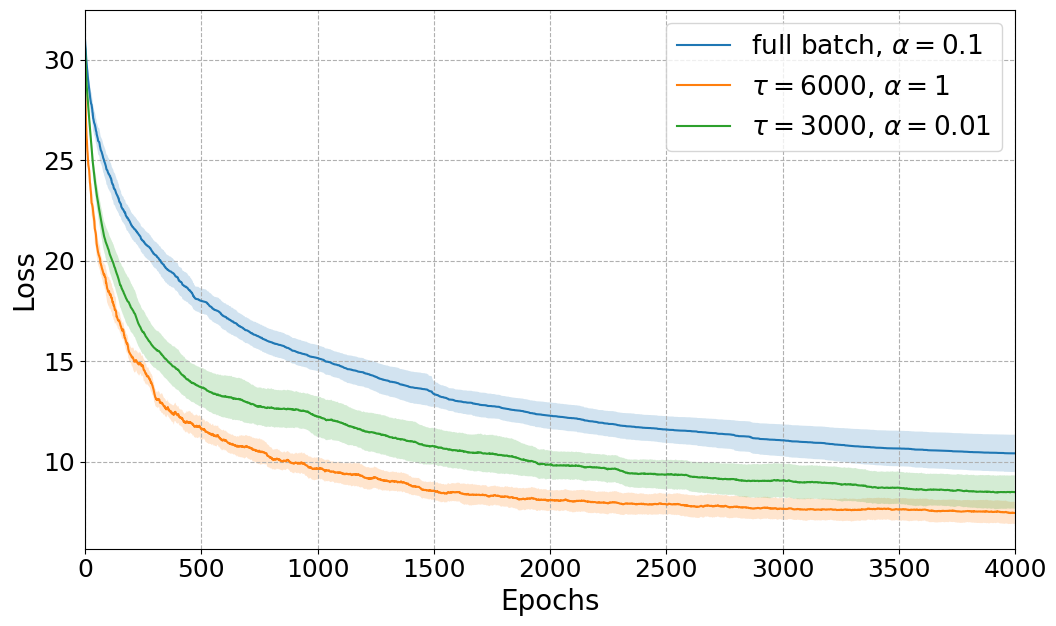} 
    \end{tabular}
     \caption{Comparison of different minibatch sizes for \texttt{MiSTP} in a multi-layer neural network. }
    \label{fig:NN1}
\end{figure}

\section{Conclusion}
\label{sec:conclusion}

In this paper, we proposed the \texttt{MiSTP} method to extend the \texttt{STP} method to case of using only an approximation of the objective function at each iteration assuming the error between the objective function and its approximation is bounded. \texttt{MiSTP} sample the search directions in the same way as \texttt{STP}, but instead of comparing the objective function at three points it compares an approximation. We derived our method's complexity in the case of nonconvex and convex objective function. The presented numerical results showed encouraging performance of \texttt{MiSTP}. In some settings, it showed superior performance over the original \texttt{STP}. There are a number of interesting future works to further extend our method, namely deriving a rule to find the optimal minibatch size, comparing the performance of \texttt{MiSTP} with other zero-order methods on deep neural networks problems, extending \texttt{MiSTP} to the case of distributed learning, and investigating \texttt{MiSTP} in the non-smooth case.

\bibliography{references}

\begin{thebibliography}{25}
\providecommand{\natexlab}[1]{#1}
\providecommand{\url}[1]{\texttt{#1}}
\expandafter\ifx\csname urlstyle\endcsname\relax
  \providecommand{\doi}[1]{doi: #1}\else
  \providecommand{\doi}{doi: \begingroup \urlstyle{rm}\Url}\fi

\bibitem[Al-Abri et~al.(2021)Al-Abri, Lin, Nelson, and Zhang]{sixth}
S.~Al-Abri, T.~X. Lin, R.~S. Nelson, and F.~Zhang.
\newblock A derivative-free distributed optimization algorithm with
  applications in multi-agent target tracking.
\newblock \emph{2021 American Control Conference (ACC)}, pp.\  3844--3849,
  2021.

\bibitem[Bergou et~al.(2020)Bergou, Gorbunov, and Richt\'{a}rik]{STP}
E.~H. Bergou, E.~Gorbunov, and P.~Richt\'{a}rik.
\newblock Stochastic three points method for unconstrained smooth minimization.
\newblock \emph{SIAM Journal on Optimization}, 30(4):2726–2749, 2020.

\bibitem[Bibi et~al.(2020)Bibi, Bergou, Sener, Ghanem, and Richtárik]{IS-STP}
A.~Bibi, E.~H. Bergou, O.~Sener, B.~Ghanem, and P.~Richtárik.
\newblock A stochastic derivative-free optimization method with importance
  sampling: Theory and learning to control.
\newblock \emph{Proceedings of the 34th AAAI Conference on Artificial
  Intelligence}, pp.\  3275--3282, 2020.

\bibitem[Chang \& Lin(2011)Chang and Lin]{LIBSVM}
C.~C. Chang and C.~J. Lin.
\newblock Libsvm: a library for support vector machines.
\newblock \emph{ACM Transactions on Intelligent Systems and Technology (TIST)},
  2011.

\bibitem[Conn et~al.(2009)Conn, Scheinberg, and Vicente]{fifteenth}
A.~R. Conn, K.~Scheinberg, and L.~N. Vicente.
\newblock Introduction to derivative free optimization.
\newblock \emph{Society for Industrial and Applied Mathematics (SIAM)}, 2009.

\bibitem[Conroy \& Sajda(2012)Conroy and Sajda]{logistic_prob}
B.~Conroy and P.~Sajda.
\newblock Fast, exact model selection and permutation testing for
  l2-regularized logistic regression.
\newblock \emph{Artificial Intelligence and Statistics}, pp.\  246--254, 2012.

\bibitem[Ghadimi \& Lan(2013)Ghadimi and Lan]{ghadimi2013stoch}
S.~Ghadimi and G.~Lan.
\newblock Stochastic first- and zeroth-order methods for nonconvex stochastic
  programming.
\newblock \emph{SIAM Journal on Optimization}, 23\penalty0 (4):\penalty0
  2341--2368, 2013.

\bibitem[Ghanbari \& Scheinberg(2017)Ghanbari and Scheinberg]{nineth}
H.~Ghanbari and K.~Scheinberg.
\newblock Black-box optimization in machine learning with trust region based
  derivative free algorithm.
\newblock \emph{arXiv: 1703.06925}, 2017.

\bibitem[Golovin et~al.(2017)Golovin, Solnik, Moitra, Kochanski, Karro, and
  Sculley]{twelveth}
D.~Golovin, B.~Solnik, S.~Moitra, G.~Kochanski, J.~Karro, and D.~Sculley.
\newblock Google vizier: A service for black-box optimization.
\newblock \emph{KDD '17: Proceedings of the 23rd ACM SIGKDD International
  Conference on Knowledge Discovery and Data Mining}, pp.\  1487--1495, 2017.

\bibitem[Gorbunov et~al.(2020)Gorbunov, Bibi, Sener, Bergou, and
  Richtarik]{SMTP}
E.~Gorbunov, A.~Bibi, O.~Sener, E.~H. Bergou, and P.~Richtarik.
\newblock A stochastic derivative free optimization method with momentum.
\newblock \emph{ICLR 2020}, 2020.

\bibitem[Gower et~al.(2019)Gower, Loizou, Qian, Sailanbayev, Shulgin, and
  Richt\'{a}rik]{thirteenth}
R.~M. Gower, N.~Loizou, X.~Qian, A.~Sailanbayev, E.~Shulgin, and
  P.~Richt\'{a}rik.
\newblock Sgd: General analysis and improved rates.
\newblock \emph{Proceedings of the 36th International Conference on Machine
  Learning}, volume 97 of Proceedings of Machine Learning Research, pages
  5200-5209, 2019.

\bibitem[Kramer et~al.(2011)Kramer, Ciaurri, and Koziel]{seventeenth}
O.~Kramer, D.~E. Ciaurri, and S.~Koziel.
\newblock Derivative-free optimization.
\newblock \emph{Computational Optimization, Methods and Algorithms}, ed by S.
  Koziel and X.-S. Yang , Springer, pages 61-83, 2011.

\bibitem[Larson et~al.(2019)Larson, Menickelly, and Wild]{DFO_Survey}
J.~Larson, M.~Menickelly, and S.~M. Wild.
\newblock Derivative-free optimization methods.
\newblock \emph{Acta Numerica}, 28:\penalty0 287–404, 2019.

\bibitem[LeCun et~al.(1998)LeCun, Bottou, Bengio, and Haffner]{MNIST}
Y.~LeCun, L.~Bottou, Y.~Bengio, and P.~Haffner.
\newblock Gradient-based learning applied to document recognition.
\newblock \emph{Proceedings of the IEEE}, 86(11):2278–2324, 1998.

\bibitem[Li et~al.(2020)Li, Tang, Zhang, and Li]{eight}
Y.~Li, Y.~Tang, R.~Zhang, and N.~Li.
\newblock Distributed reinforcement learning for decentralized linear quadratic
  control: A derivative-free policy optimization approach.
\newblock \emph{Proceedings of the 2nd Conference on Learning for Dynamics and
  Control}, PMLR, 120:814-814, 2020.

\bibitem[Liu et~al.(2018)Liu, Kailkhura, Chen, Ting, Chang, and Amini]{ZO-SVRG}
S.~Liu, B.~Kailkhura, P.-Y. Chen, P.~Ting, S.~Chang, and L.~Amini.
\newblock Zeroth-order stochastic variance reduction for nonconvex
  optimization.
\newblock \emph{Advances in Neural Information Processing Systems (NeurIPS)},
  pp.\  3731–3741, 2018.

\bibitem[Malik et~al.(2020)Malik, Pananjady, Bhatia, Khamaru, Bartlett, and
  Wainwright]{seventh}
D.~Malik, A.~Pananjady, K.~Bhatia, K.~Khamaru, P.~L. Bartlett, and M.~J.
  Wainwright.
\newblock Derivative-free methods for policy optimization: Guarantees for
  linear quadratic systems.
\newblock \emph{Journal of Machine Learning Research}, 21\penalty0
  (21):\penalty0 1--51, 2020.

\bibitem[Nesterov \& Spokoiny(2017)Nesterov and Spokoiny]{sixteenth}
Y.~Nesterov and V.~Spokoiny.
\newblock Random gradient-free minimization of convex functions.
\newblock \emph{Foundations of Computational Mathematics}, 17:\penalty0
  527--566, 2017.

\bibitem[P.Koch et~al.(2018)P.Koch, Golovidov, Gardner, Wujek, Griffin, and
  Xu]{fifth}
P.Koch, O.~Golovidov, S.~Gardner, B.~Wujek, J.~Griffin, and Y.~Xu.
\newblock Autotune: A derivative-free optimization framework for hyperparameter
  tuning.
\newblock \emph{KDD ’18: Proceedings of the 24th ACM SIGKDD International
  Conference on Knowledge Discovery \& Data Mining}, pp.\  443--452, 2018.

\bibitem[Reddi et~al.(2016)Reddi, Hefny, Sra, Poczos, and Smola]{SVRG}
S.~J. Reddi, A.~Hefny, S.~Sra, B.~Poczos, and A.~Smola.
\newblock Stochastic variance reduction for nonconvex optimization.
\newblock \emph{International conference on machine learning}, pp.\  314--323,
  2016.

\bibitem[Shen et~al.(2013)Shen, Alam, Fikse, and Rönnegård]{ridge_prob}
X.~Shen, M.~Alam, F.~Fikse, and L.~Rönnegård.
\newblock A novel generalized ridge regression method for quantitative
  genetics.
\newblock \emph{Genetics}, 193(4):1255–1268, 2013.

\bibitem[Suykens \& Vandewalle(1999)Suykens and Vandewalle]{svm_prob}
J.~A. Suykens and J.~Vandewalle.
\newblock Least squares support vector machine classifiers.
\newblock \emph{Neural processing letters}, 9(3):293–300, 1999.

\bibitem[Turner et~al.(2021)Turner, Eriksson, McCourt, Kiili, Laaksonen, Xu,
  and Guyon]{fourth}
R.~Turner, D.~Eriksson, M.~McCourt, J.~Kiili, E.~Laaksonen, Z.~Xu, and
  I.~Guyon.
\newblock Bayesian optimization is superior to random search for machine
  learning hyperparameter tuning: Analysis of the black-box optimization
  challenge 2020.
\newblock \emph{Proceedings of Machine Learning Research}, 133:3–26, 2021.

\bibitem[Ughi et~al.(2021)Ughi, Abrol, and Tanner]{eleventh}
G.~Ughi, V.~Abrol, and J.~Tanner.
\newblock An empirical study of derivative-free-optimization algorithms for
  targeted black-box attacks in deep neural networks.
\newblock \emph{Optimization and Engineering}, 2021.

\bibitem[Watanabe \& Roux(2014)Watanabe and Roux]{tenth}
S.~Watanabe and J.~Le Roux.
\newblock Black box optimization for automatic speech recognition.
\newblock \emph{Acoustics, Speech and Signal Processing (ICASSP), 2014 IEEE
  International Conference}, pp.\  3256--3260, 2014.

\end{thebibliography}
\bibliographystyle{iclr2023_conference}

\appendix

\section{Proofs}
\label{appendixA}

\subsection{Proof of lemma \ref{unbiasedness}} \label{lem1_proof}
\begin{proof}
At each iteration, we choose uniformly at random a subset $\mathcal{B}$ from all subsets of $ [n]$ of cardinality $|\mathcal{B}|$.
Hence, the probability of choosing the subset $\mathcal{B}$ is:
$$ \Prob (\mathcal{B}) = 1/\tbinom{n}{|\mathcal{B}|}$$
Therefore:
\begin{eqnarray*}
\Exp_{\mathcal{B}} \left[f_{\mathcal{B}}(x)\right] & = & \Exp_{\mathcal{B}} \left[\frac{1}{|\mathcal{B}|}  \sum_{i \in \mathcal{B}} f_{i}(x)\right] \\
& = & \frac{1}{|\mathcal{B}|} \Exp_{\mathcal{B}} \left[  \sum_{i \in \mathcal{B}} f_{i}(x)\right] \\
& = & \frac{1}{|\mathcal{B}|}  \sum_{j=1}^{\tbinom{n}{|\mathcal{B}|}} \Prob (\mathcal{B}_{j}) \sum_{i \in \mathcal{B}_{j}} f_{i}(x) \\
& = &   \frac{1}{|\mathcal{B}|}  \Prob (\mathcal{B}) \sum_{j=1}^{\tbinom{n}{|\mathcal{B}|}}  \sum_{i \in \mathcal{B}_{j}} f_{i}(x) 
\end{eqnarray*}

$\sum_{j=1}^{\tbinom{n}{|\mathcal{B}|}}  \sum_{i \in \mathcal{B}_{j}} f_{i}(x)$ is the sum of elements of all possible subsets.

Given an elemnt $ f_{i}(x)$, the number of possible subsets including $ f_{i}(x)$ is $\tbinom{n-1}{|\mathcal{B}|-1}$ which means $ f_{i}(x)$ appears $\tbinom{n-1}{|\mathcal{B}|-1}$ times in the previous sum. Therefore:
\begin{eqnarray*}
\Exp_{\mathcal{B}} \left[f_{\mathcal{B}}(x)\right] & = &  \frac{1}{|\mathcal{B}|}  \Prob (\mathcal{B}) \tbinom{n-1}{|\mathcal{B}|-1} \sum_{i=1}^n  f_{i}(x) \\
& = & \frac{1}{|\mathcal{B}|} \frac{1}{\tbinom{n}{|\mathcal{B}|}} \tbinom{n-1}{|\mathcal{B}|-1} \sum_{i=1}^n  f_{i}(x)\\
& = & \frac{1}{|\mathcal{B}|} \frac{1}{\frac{n}{|\mathcal{B}|}\tbinom{n-1}{|\mathcal{B}|-1} } \tbinom{n-1}{|\mathcal{B}|-1} \sum_{i=1}^n  f_{i}(x)\\
& = & \frac{1}{n} \sum_{i=1}^n  f_{i}(x) \\
& = & f(x)
\end{eqnarray*}
\end{proof}

\subsection{Expected deviation between $f$ and $f_{\mathcal{B}}$} \label{expec_dev}
\begin{lem} \label{deviation}
For all $x\in \R^d$ independent from $\mathcal{B}$, the expected deviation between $f$ and $f_{\mathcal{B}}$ is bounded by:
\[   \Exp_{\mathcal{B}} [(f(x) - f_{\mathcal{B}}(x))^2] \leq \frac{A}{|\mathcal{B}|}  \]
where: $ A = \sup_{x\in \R^d} \frac{1}{n} \sum_{i=1}^n (f_{i}(x) - f(x))^2$
\end{lem}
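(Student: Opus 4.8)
The plan is to specialize to the with-replacement sampling regime referenced just above the statement, under which $f_{\mathcal{B}}(x)$ becomes an average of $|\mathcal{B}|$ independent and identically distributed terms; then I invoke the standard fact that the variance of an average of i.i.d.\ variables scales like $1/|\mathcal{B}|$.

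First I would write $\mathcal{B} = \{i_1, \ldots, i_{|\mathcal{B}|}\}$ where each index $i_j$ is drawn independently and uniformly from $[n]$, so that $f_{\mathcal{B}}(x) = \frac{1}{|\mathcal{B}|}\sum_{j=1}^{|\mathcal{B}|} f_{i_j}(x)$. For $x$ independent from $\mathcal{B}$, each term satisfies $\Exp[f_{i_j}(x)] = \frac{1}{n}\sum_{i=1}^n f_i(x) = f(x)$, so $f_{\mathcal{B}}(x)$ is unbiased for $f(x)$ and therefore $\Exp_{\mathcal{B}}[(f(x) - f_{\mathcal{B}}(x))^2]$ is exactly the variance $\Var_{\mathcal{B}}(f_{\mathcal{B}}(x))$. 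Using the mutual independence of the $i_j$, the variance of the average factorizes as
\[\Var_{\mathcal{B}}\left(f_{\mathcal{B}}(x)\right) = \frac{1}{|\mathcal{B}|}\Var\left(f_{i_1}(x)\right) = \frac{1}{|\mathcal{B}|}\cdot\frac{1}{n}\sum_{i=1}^n (f_i(x)-f(x))^2.\]
Finally I would bound $\frac{1}{n}\sum_{i=1}^n (f_i(x)-f(x))^2 \leq A$ directly from the definition of $A$ as the supremum over $x\in\R^d$, which yields the claimed inequality $\Exp_{\mathcal{B}}[(f(x)-f_{\mathcal{B}}(x))^2]\leq \frac{A}{|\mathcal{B}|}$.

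The one point that needs care — and the only genuine obstacle — is the sampling model: the clean factor $1/|\mathcal{B}|$ relies on the independence afforded by sampling with replacement. Were one instead to sample a subset of size $|\mathcal{B}|$ without replacement (as in the proof of Lemma~\ref{unbiasedness}), the draws would be exchangeable but negatively correlated, and the variance computation would pick up a finite-population correction factor $\frac{n-|\mathcal{B}|}{n-1}\leq 1$. This factor only strengthens the bound, so the stated inequality remains valid in that regime too; but establishing it there requires handling the pairwise covariances rather than the one-line i.i.d.\ argument.
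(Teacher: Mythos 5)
Your argument is correct, but it proves the bound under a different sampling model than the one the paper's own proof uses, so the two are worth contrasting. You work with replacement: the indices $i_1,\dots,i_{|\mathcal{B}|}$ are i.i.d.\ uniform on $[n]$, the estimator is unbiased, and the variance of the sample mean is $\tfrac{1}{|\mathcal{B}|}\Var(f_{i_1}(x)) = \tfrac{1}{|\mathcal{B}|}\cdot\tfrac{1}{n}\sum_i (f_i(x)-f(x))^2 \le \tfrac{A}{|\mathcal{B}|}$ in one line. This matches the phrase ``sampling minibatches uniformly with replacement'' in the main text, but it does not match the sampling scheme actually used in the proof of Lemma~\ref{unbiasedness} and in Algorithm~\ref{alg:MiSTP}, where $\mathcal{B}$ is a uniformly random \emph{subset} of $[n]$ of fixed cardinality, i.e.\ sampling without replacement. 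The paper's proof handles that case directly: writing $g_i = f_i(x)-f(x)$ and $B=\tfrac1n\sum_i g_i^2$, it expands $\Exp_{\mathcal{B}}[(\tfrac{1}{|\mathcal{B}|}\sum_{i\in\mathcal{B}}g_i)^2]$ into diagonal and cross terms, counts that each ordered pair $i\neq j$ appears in $\tbinom{n-2}{|\mathcal{B}|-2}$ of the $\tbinom{n}{|\mathcal{B}|}$ subsets so that the cross term equals $\tfrac{|\mathcal{B}|-1}{|\mathcal{B}|\,n(n-1)}\sum_{i\neq j}g_ig_j$, and then uses the cancellation $\sum_{i,j}g_ig_j=(\sum_i g_i)^2=0$ to arrive at exactly $\tfrac{B(n-|\mathcal{B}|)}{|\mathcal{B}|(n-1)}\le \tfrac{B}{|\mathcal{B}|}\le\tfrac{A}{|\mathcal{B}|}$ --- precisely the finite-population correction $\tfrac{n-|\mathcal{B}|}{n-1}\le 1$ you name in your closing paragraph but do not derive. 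So your proof buys brevity and covers the with-replacement regime the surrounding text advertises, while the paper's covariance computation is the one that actually applies to the algorithm as stated and yields the slightly sharper constant; your remark that the without-replacement bound can only be stronger is correct, but to be self-contained for that model you would need to carry out the pairwise-covariance calculation you sketch, which is exactly what the paper does.
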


\begin{proof}
Let us note $ g_i := f_{i}(x) - f(x) $, $ g_\mathcal{B} = f_{\mathcal{B}}(x) - f(x) = \frac{1}{|\mathcal{B}|} \sum_{i \in \mathcal{B}}  g_i $, and $B = \Exp_i [g_i^2] = \frac{1}{n} \sum_{i=1}^n g_i^2 \leq A$, we have:
\begin{eqnarray*}
\Exp_{\mathcal{B}} \left[g_\mathcal{B}^2\right] &=& \Exp_{\mathcal{B}} \left[\left( \frac{1}{|\mathcal{B}|} \sum_{i \in \mathcal{B}}  g_i\right)^2\right] \\
&=& \frac{1}{|\mathcal{B}|^2} \Exp_{\mathcal{B}} \left[\left( \sum_{i \in \mathcal{B}}  g_i\right)^2\right] \\
&=& \frac{1}{|\mathcal{B}|^2} \Exp_{\mathcal{B}} \left[ \sum_{i,j \in \mathcal{B}}  g_i g_j\right] \\
&=& \frac{1}{|\mathcal{B}|^2} \Exp_{\mathcal{B}} \left[ \sum_{i\neq j \in \mathcal{B}}  g_i g_j\right] + \frac{1}{|\mathcal{B}|^2} \Exp_{\mathcal{B}} \left[ \sum_{i \in \mathcal{B}}  g_i^2\right] \\
&=& \frac{1}{|\mathcal{B}|^2} \Exp_{\mathcal{B}} \left[ \sum_{i\neq j \in \mathcal{B}}  g_i g_j\right] + \frac{1}{|\mathcal{B}|^2} |\mathcal{B}| B \\
&=& \frac{1}{|\mathcal{B}|^2} \Exp_{\mathcal{B}} \left[ \sum_{i\neq j \in \mathcal{B}}  g_i g_j\right] + \frac{B}{|\mathcal{B}|}
\end{eqnarray*}
we have: 
$\Exp_{\mathcal{B}} [ \sum_{i\neq j \in \mathcal{B}}  g_i g_j] = \sum_{k=1}^{\tbinom{n}{|\mathcal{B}|}} \frac{1}{\tbinom{n}{|\mathcal{B}|}} \sum_{i\neq j \in \mathcal{B}_k}  g_i g_j$

each $i$ and $j$ appear together in $\tbinom{n-2}{|\mathcal{B}|-2}$ batches, hence:
$\Exp_{\mathcal{B}} [ \sum_{i\neq j \in \mathcal{B}}  g_i g_j] =  \frac{1}{\tbinom{n}{|\mathcal{B}|}} \tbinom{n-2}{|\mathcal{B}|-2} \sum_{i\neq j \in [n]}  g_i g_j = \frac{1}{\frac{n(n-1)}{|\mathcal{B}|(|\mathcal{B}|-1)}\tbinom{n-2}{|\mathcal{B}|-2}} \tbinom{n-2}{|\mathcal{B}|-2} \sum_{i\neq j \in [n]}  g_i g_j  = \frac{|\mathcal{B}|(|\mathcal{B}|-1)}{n(n-1)} \sum_{i\neq j \in [n]}  g_i g_j$

therefore:

\begin{eqnarray*}
\Exp_{\mathcal{B}} [g_\mathcal{B}^2] &=& \frac{1}{|\mathcal{B}|^2} \frac{|\mathcal{B}|(|\mathcal{B}|-1)}{n(n-1)} \sum_{i\neq j \in [n]}  g_i g_j + \frac{B}{|\mathcal{B}|} \\
&=& \frac{1}{|\mathcal{B}|} \frac{(|\mathcal{B}|-1)}{n(n-1)} \sum_{i, j \in [n]}  g_i g_j  - \frac{1}{|\mathcal{B}|} \frac{(|\mathcal{B}|-1)}{n(n-1)} \sum_{i\in [n]}  g_i^2 + \frac{B}{|\mathcal{B}|} \\
&=& \frac{1}{|\mathcal{B}|} \frac{(|\mathcal{B}|-1)}{n(n-1)} \sum_{i, j \in [n]}  g_i g_j  - \frac{1}{|\mathcal{B}|} \frac{(|\mathcal{B}|-1)}{n(n-1)} n B + \frac{B}{|\mathcal{B}|} \\
&=& \frac{1}{|\mathcal{B}|} \frac{(|\mathcal{B}|-1)}{n(n-1)} \sum_{i, j \in [n]}  g_i g_j  - \frac{1}{|\mathcal{B}|} \frac{(|\mathcal{B}|-1)}{n(n-1)} n B + \frac{B}{|\mathcal{B}|} \\
&=& 0+ \frac{B (n-|\mathcal{B}|)}{|\mathcal{B}|(n-1)} \\
&\leq& \frac{B}{|\mathcal{B}|} \\
&\leq& \frac{A}{|\mathcal{B}|}
\end{eqnarray*}
the last equality follows from $\sum_{i, j \in [n]}  g_i g_j = n^2 (f-f)^2 = 0$
\end{proof}

\subsection{Extension of proof of lemma \ref{lem:main1}}
\label{proof_ext}

we have: $x_{k+1} = \arg \min \{f_{\mathcal{B}_k}(x_k-\alpha_k s_k), f_{\mathcal{B}_k}(x_k+\alpha_k s_k),f_{\mathcal{B}_k}(x_k)\}$, 

 Therefore: $f_{\mathcal{B}_{k}}(x_{k+1}) \leq f_{\mathcal{B}_k}(x_k-\alpha_k s_k) \inlineeqnum\label{eq3} $. 

From $L_i$-smoothness of $f_i$ we have:
\[f_{i}(x_k-\alpha_k s_k) \leq f_{i}(x_k) - \ve{\nabla f_{i}(x_k)}{\alpha_k s_k} + \frac{L_{i}}{2}\|\alpha_k s_k\|_2^2  \]
by summing over $f_i$ for $i \in \mathcal{B}_k$ and multiplying by $1/|\mathcal{B}_k|$ we get:
\begin{eqnarray*}
f_{\mathcal{B}_k}(x_k - \alpha_k s_k) &\leq& f_{\mathcal{B}_k}(x_k) - \ve{\nabla f_{\mathcal{B}_k}(x_k)}{\alpha_k s_k} + \frac{L_{\mathcal{B}_k}}{2}\|\alpha_k s_k\|_2^2 \\
& =& f_{\mathcal{B}_k}(x_k) - \alpha_k \ve{\nabla f_{\mathcal{B}_k}(x_k)}{s_k} + \frac{L_{\mathcal{B}_k}}{2}\alpha_k^2\|s_k\|_2^2   \inlineeqnum\label{eq2}
\end{eqnarray*}


By using inequalities (4) (\ref{eq1}), (\ref{eq3}), and (\ref{eq2}) we get:
\[  f(x_{k+1}) \leq  f_{\mathcal{B}_k}(x_k) - \alpha_k \ve{\nabla f_{\mathcal{B}_k}(x_k)}{s_k} + \frac{L_{\mathcal{B}_k}}{2}\alpha_k^2\|s_k\|_2^2 + e_{\mathcal{B}_k}^{k+1}   \]
where $e_{\mathcal{B}_k}^{k+1} =  |f(x_{k+1}) - f_{\mathcal{B}_{k}}(x_{k+1})|$

By  taking the expectation conditioned on $x_k$ and $s_k$ 
we get:
\[  \Exp [f(x_{k+1}) | x_k, s_k] \leq  f(x_k) - \alpha_k \ve{\nabla f(x_k)}{s_k} + \frac{L}{2}\alpha_k^2\|s_k\|_2^2 + \Exp [e_{\mathcal{B}_k}^{k+1}| x_k, s_k ]  \]

\subsection{Proof of theorem \ref{thm:ncc2-2}} \label{proof_nonconvex}
\begin{proof}  If  $g_k\le \e $ for some $k\leq K$, then we are done. Assume hence by contradiction that $g_k>\e$ for all $k\leq K$.  From lemma \ref{lem:main1}, we have:
\[\theta_{k+1} \leq \theta_k -\mu_{\cal D} \alpha g_k +\frac{L}{2}\alpha^2 +   \sigma_{|\mathcal{B}|},\]
where $\theta_k = \Exp [f(x_k)]$ and $g_k = \Exp [\|\nabla f(x_k)\|_{\cal D}]$. Hence, 
\[f_* \leq \theta_{K+1} < \theta_0 - (K+1) \left(\mu_{\cal D} \alpha \e - \frac{L}{2}\alpha^2 -  \sigma_{|\mathcal{B}|}\right) \overset{\eqref{eq:kineq31}}{\leq} \theta_0 - (f(x_0)-f_*) = f_*,\]
which is a contradiction.
\end{proof}

\subsection{Proof of theorem \ref{thm:convex3}} \label{proof_convex}
\begin{proof} from proof of lemma \ref{lem:main1} we have:
\[  \Exp [f(x_{k+1}) | x_k] \leq  f(x_k) - \alpha \mu_{\cal D} \|\nabla f(x_k)\|_{\cal D} + \frac{L}{2}\alpha^2\ +  \sigma_{|\mathcal{B}|}   \]
Let us substitute \eqref{eq:s8jd7dh2} into  the above inequality and take expectations. We get
\begin{equation}\label{eq:ss8sjs82}\theta_{k+1}  \leq \theta_k  - \frac{\mu_{\cal D} \alpha}{R_0} (\theta_k - f(x_*)) +\frac{L}{2}\alpha^2 +  \sigma_{|\mathcal{B}|}.\end{equation}
 Let $r_k = \theta_k-f(x_*)$ and $\delta = 1-\frac{\mu_{\cal D} \alpha}{R_0}\in (0,1)$. Subtracting $f(x_*)$ from both sides of \eqref{eq:ss8sjs82}, we obtain 
\begin{eqnarray*}r_K &\leq& \delta r_{K-1}  +\frac{L}{2}\alpha^2+  \sigma_{|\mathcal{B}|} \\ 
&\leq& \delta^K r_0+ \left(\frac{L}{2}\alpha^2+ \sigma_{|\mathcal{B}|}\right)\sum_{i=0}^{K-1} \delta^i \\
&\leq & \exp(-\mu_{\cal D} \alpha K/ R_0) r_0 + \left(\frac{L}{2}\alpha^2+ \sigma_{|\mathcal{B}|}\right) \frac{1}{1-\delta} \\ 
&=& \quad \exp(-\mu_{\cal D} \alpha K/ R_0) r_0 +  \frac{\e}{2} +  \sigma_{|\mathcal{B}|} \frac{L R_0^2}{\e \mu_{\cal D}^2 } 
  \;\; \overset{\eqref{eq:isjsssus2}}{\leq} \;\; \e.
\end{eqnarray*}
\end{proof} 
\section{Additional experiments results}
\label{appendixB}

In this section, we report more experiments results on the performance of \texttt{MiSTP} and \texttt{SGD} with different minibatch sizes on ridge regression and regularized logistic regression problems and using different LIBSVM \citep{LIBSVM} datasets. In all the figures, the first column shows the objective function values and the second shows the approximate function values ($f_{\mathcal{B}} = \Tilde{f}$). We denote by $\tau$ the minibatch size and $\alpha$ the stepsize.

\subsection{Ridge regression problem}

\subsubsection{a1a dataset}

\begin{figure}[H]
    \centering
    \begin{tabular}{cc}
       \includegraphics[width=0.45\textwidth]{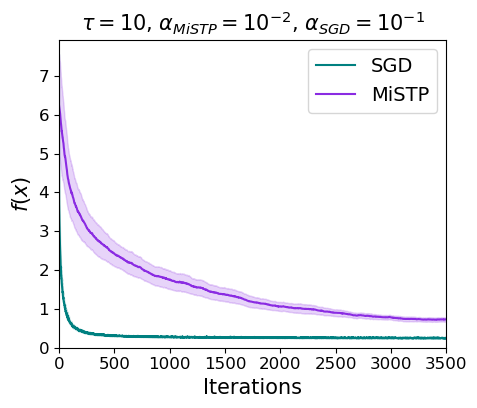} & 
       \includegraphics[width=0.45\textwidth]{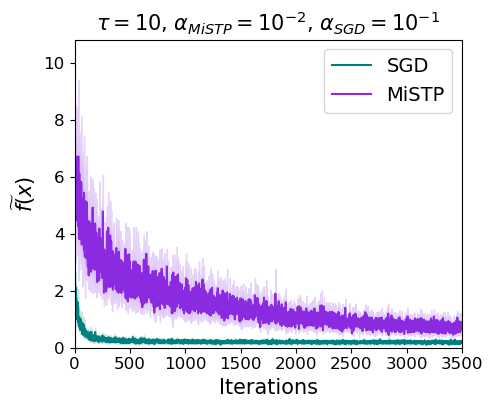} \\
       \includegraphics[width=0.45\textwidth]{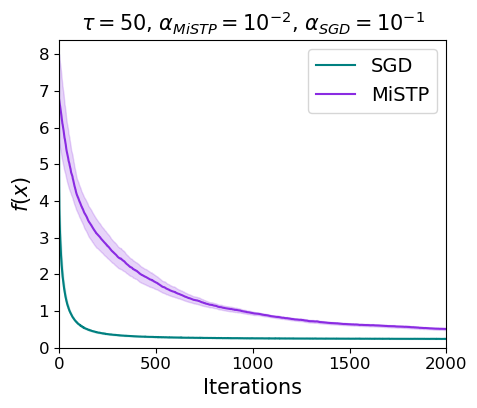} & 
       \includegraphics[width=0.45\textwidth]{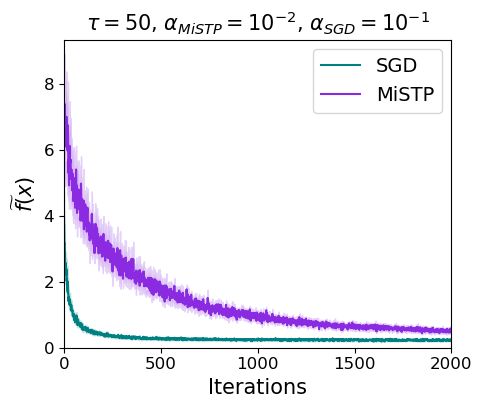} \\
       \includegraphics[width=0.45\textwidth]{graphs/a1a100.png} &
      \includegraphics[width=0.45\textwidth]{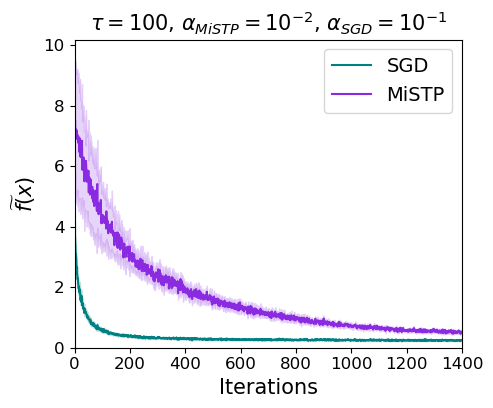}
      
    \end{tabular}
    
    \label{fig:a1a}
\end{figure}

    

         


\begin{figure} [H]
    \begin{tabular}{cc}
       \includegraphics[width=0.45\textwidth]{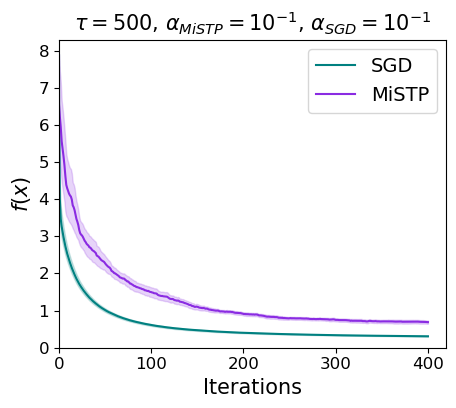} & \includegraphics[width=0.45\textwidth]{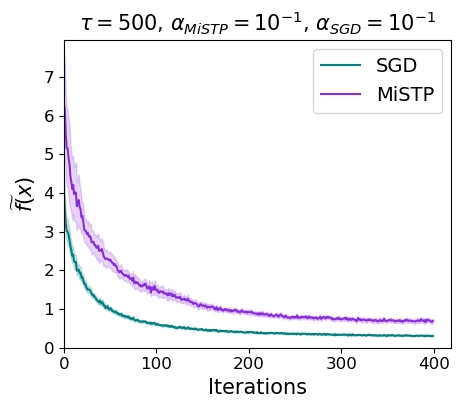} \\  
       \includegraphics[width=0.45\textwidth]{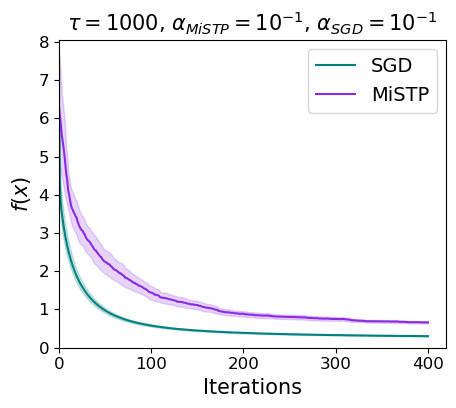} &
       \includegraphics[width=0.45\textwidth]{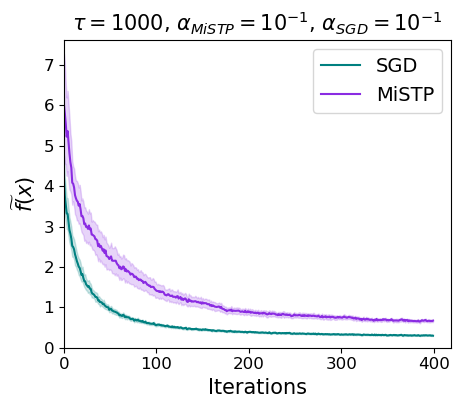} \\
       \includegraphics[width=0.45\textwidth]{graphs/a1an.png} & 
    \end{tabular}
    \caption{Performance of \texttt{MiSTP} and \texttt{SGD} on ridge regression problem using a1a dataset from LIBSVM.}
    \label{fig:a1a}
\end{figure}

\subsubsection{abalone dataset}
\begin{figure} [H]
    \centering
    \begin{tabular}{cc}
       \includegraphics[width=0.45\textwidth]{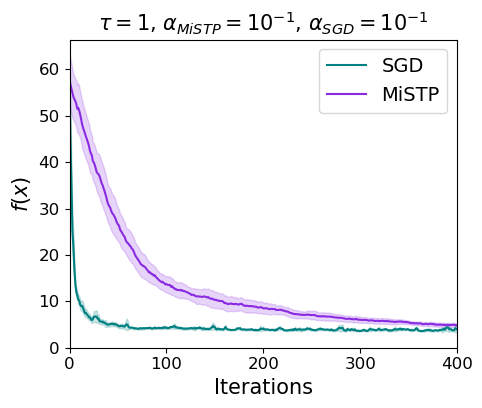}  & \includegraphics[width=0.45\textwidth]{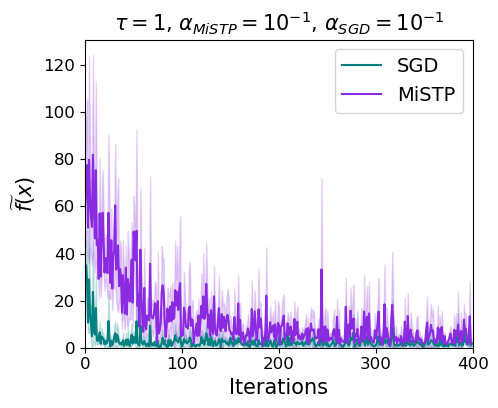} \\
       \includegraphics[width=0.45\textwidth]{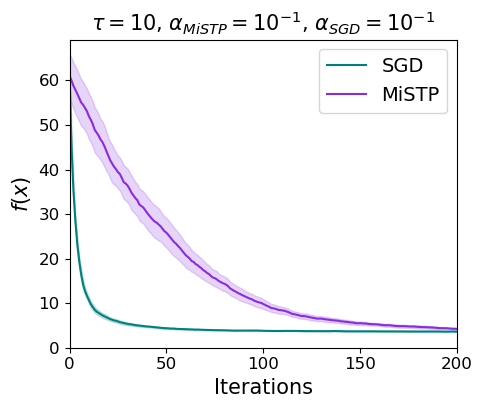}  & \includegraphics[width=0.45\textwidth]{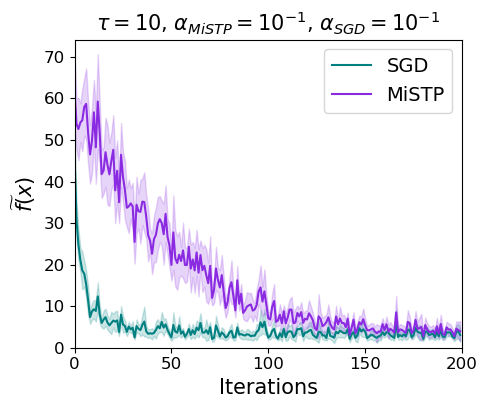} \\
       \includegraphics[width=0.45\textwidth]{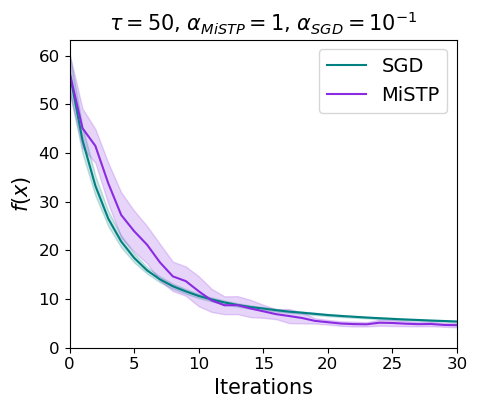}  & \includegraphics[width=0.45\textwidth]{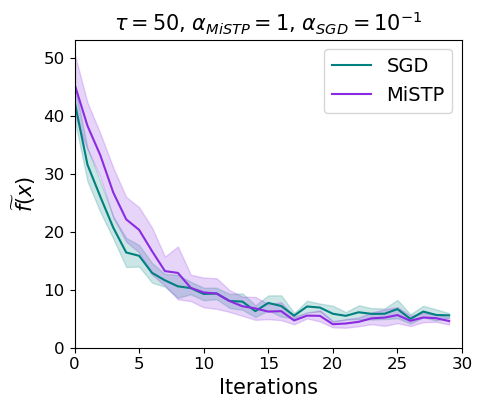} \\
      \includegraphics[width=0.45\textwidth]{graphs/abalone100.png}  & \includegraphics[width=0.45\textwidth]{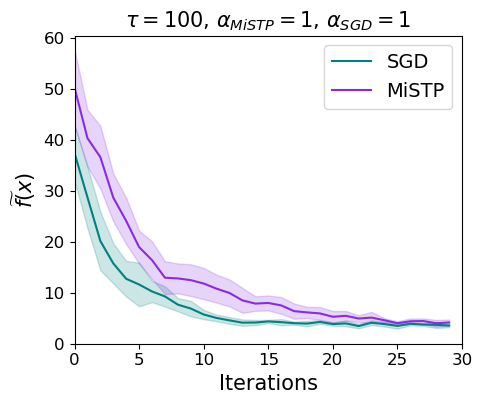}
    \end{tabular}
    
    \label{fig:abalone}
\end{figure}
\begin{figure} [H]
    \begin{tabular}{cc}
    \includegraphics[width=0.45\textwidth]{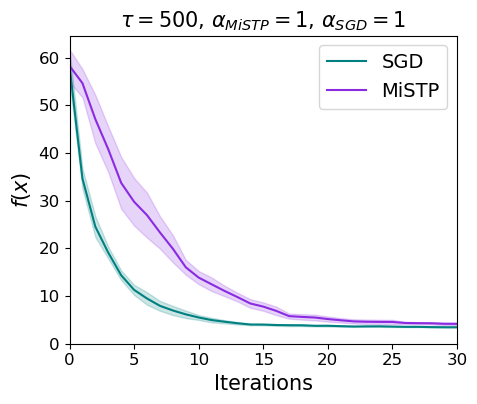}  & \includegraphics[width=0.45\textwidth]{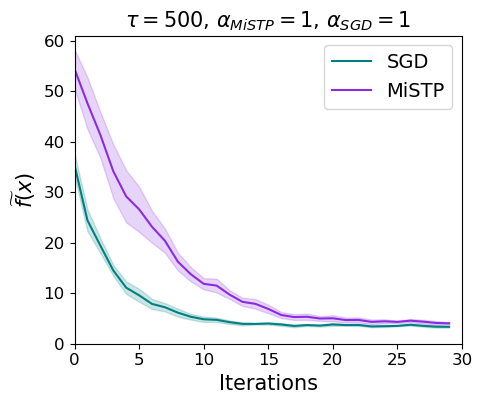} \\
    \includegraphics[width=0.45\textwidth]{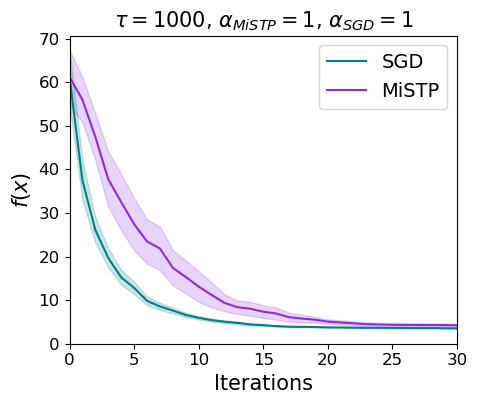}  & \includegraphics[width=0.45\textwidth]{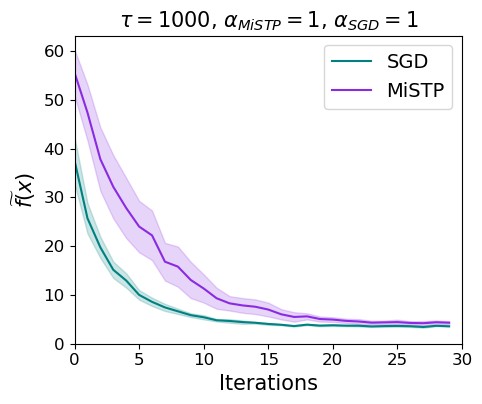} \\
    \includegraphics[width=0.45\textwidth]{graphs/abalonen.png}  & 
    \end{tabular}
    \caption{Performance of \texttt{MiSTP} and \texttt{SGD} on ridge regression problem using abalone dataset from LIBSVM.}
    \label{fig:abalone}
\end{figure}

\subsection{Regularized logistic regression problem}
\subsubsection{australian dataset}
\begin{figure} [H]
    \centering
    \begin{tabular}{cc}
       \includegraphics[width=0.45\textwidth]{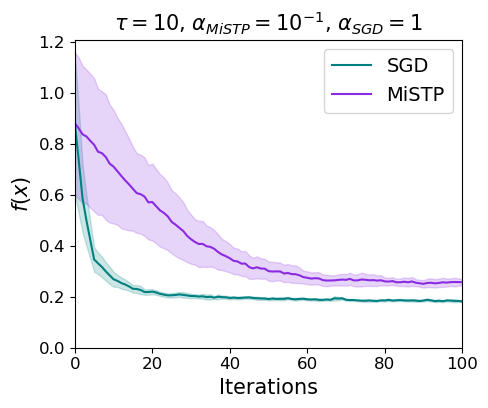}  & \includegraphics[width=0.45\textwidth]{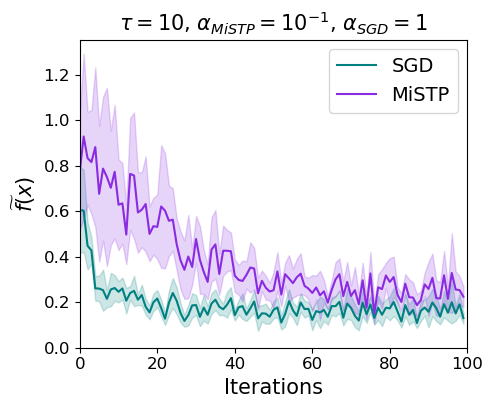} \\
       \includegraphics[width=0.45\textwidth]{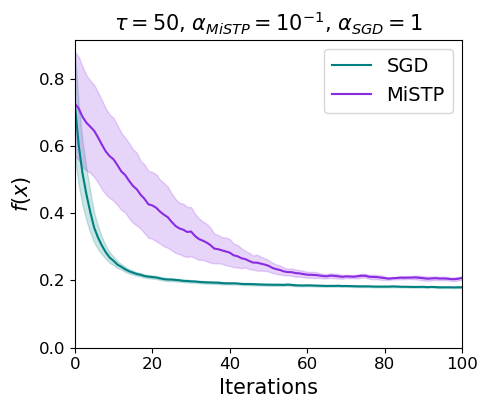}  & \includegraphics[width=0.45\textwidth]{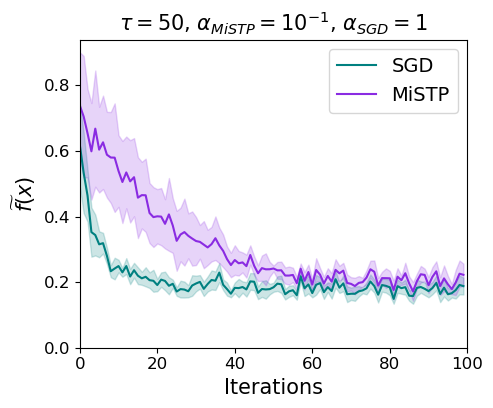} \\
        \includegraphics[width=0.45\textwidth]{graphs/laustralian100.png}  & \includegraphics[width=0.45\textwidth]{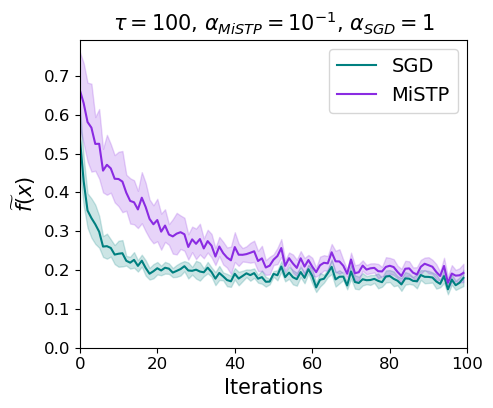} 

    \end{tabular}
    \label{fig:australian}
\end{figure}

\begin{figure} [H]
    \begin{tabular}{cc}
    \includegraphics[width=0.45\textwidth]{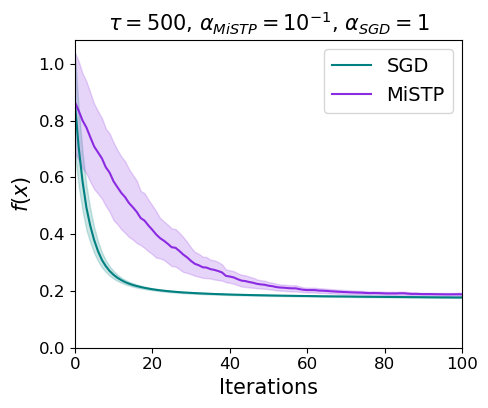}  & \includegraphics[width=0.45\textwidth]{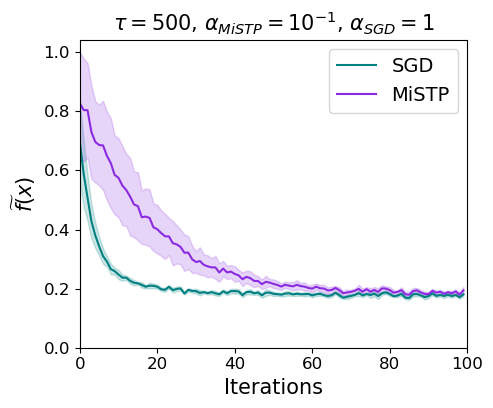} \\
     \includegraphics[width=0.45\textwidth]{graphs/laustraliann.png}  &
    \end{tabular}
    \caption{Performance of \texttt{MiSTP} and \texttt{SGD} on regularized logistic regression problem using australian dataset from LIBSVM.}
    
    \label{fig:australian}
\end{figure}

        
    

\subsubsection{a1a dataset}

\begin{figure} [H]
    \centering
    \begin{tabular}{cc}
       \includegraphics[width=0.45\textwidth]{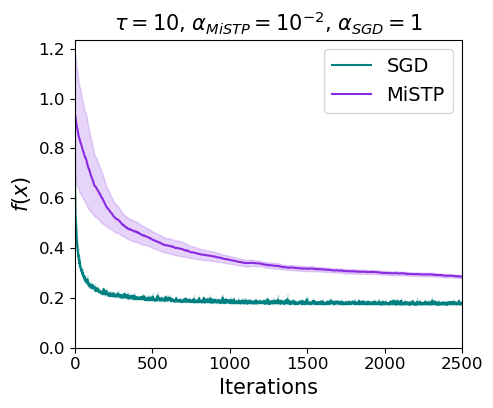} & 
       \includegraphics[width=0.45\textwidth]{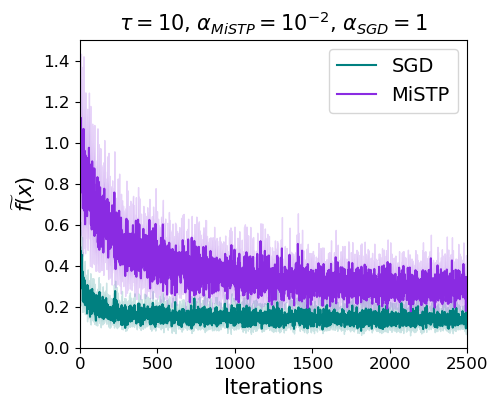} \\
       \includegraphics[width=0.45\textwidth]{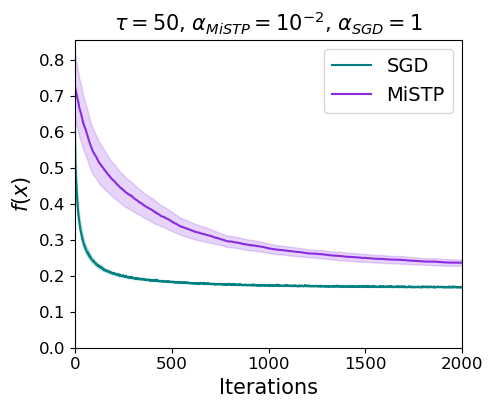} & 
       \includegraphics[width=0.45\textwidth]{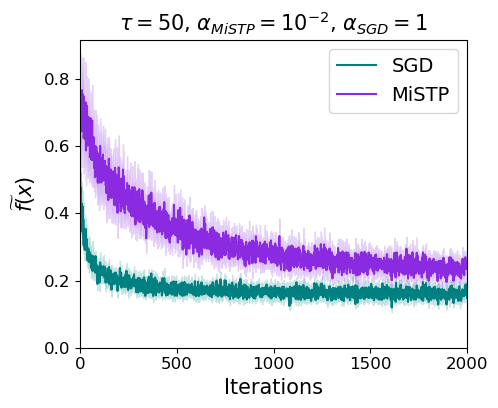} \\
      \includegraphics[width=0.45\textwidth]{graphs/la1a100.png} & \includegraphics[width=0.45\textwidth]{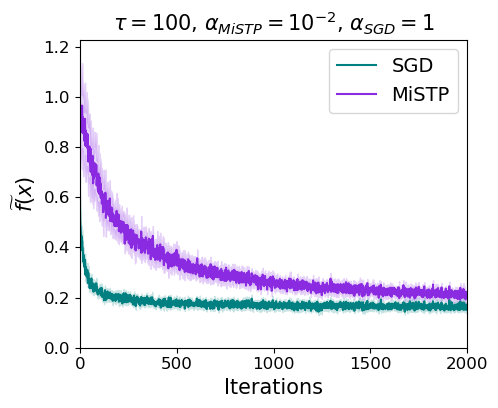} \\
      \includegraphics[width=0.45\textwidth]{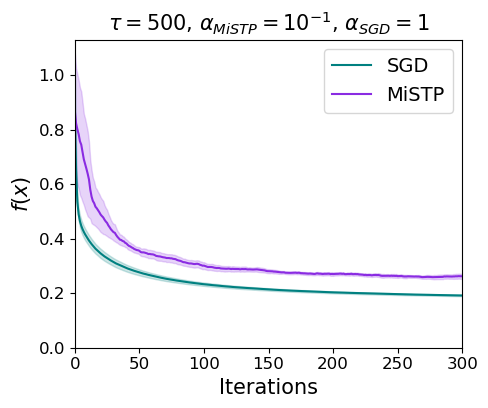} &
       \includegraphics[width=0.45\textwidth]{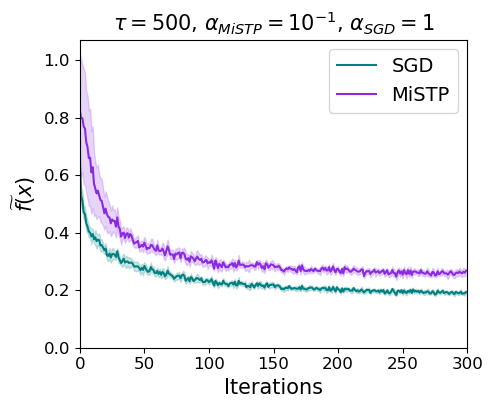}
      
    \end{tabular}
    \label{fig:la1a}
\end{figure}

 %
     
  %
  %
  %

\begin{figure} [H]
    \begin{tabular}{cc}
       \includegraphics[width=0.45\textwidth]{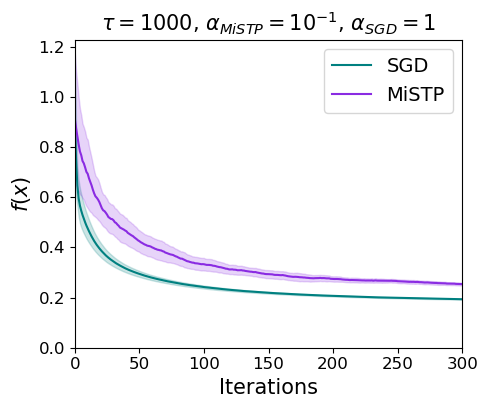} &
       \includegraphics[width=0.45\textwidth]{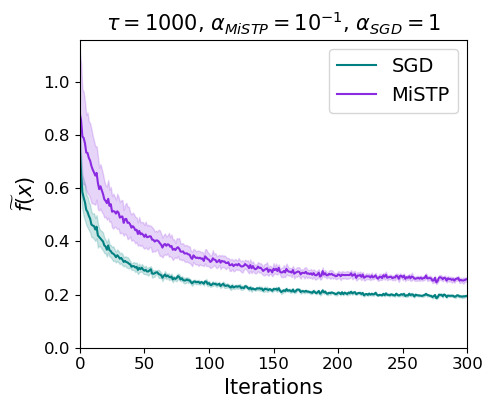} \\
       \includegraphics[width=0.45\textwidth]{graphs/la1an.png} & 
    \end{tabular}
    \caption{Performance of \texttt{MiSTP} and \texttt{SGD} on regularized logistic regression problem using a1a dataset from LIBSVM.}
    \label{fig:la1a}
\end{figure}

\end{document}